\newtheorem{theorem}{Theorem}[section]
\newtheorem{cor}[theorem]{Corollary}
\newtheorem{lemma}[theorem]{Lemma}
\theoremstyle{definition}
\newtheorem{defini}[theorem]{Definition}
\theoremstyle{remark}
\numberwithin{equation}{section}
\title{Geometric achromatic and pseudoachromatic indices}
\author{
O. Aichholzer \footnotemark[2]
\and G. Araujo-Pardo \footnotemark[3]
\and N. Garc{\'i}a-Col{\'i}n \footnotemark[3]
\and T. Hackl \footnotemark[2]
\and D. Lara \footnotemark[4]
\and C. Rubio-Montiel \footnotemark[3]
\and J. Urrutia \footnotemark[3]
 }
\begin{document}
\maketitle

\def\thefootnote{\fnsymbol{footnote}}
\footnotetext[2]{Institute for Software Technology, Graz University of
  Technology, Austria, {\tt [oaich|thackl]@ist.tugraz.at}.}
\footnotetext[3]{Instituto de Matem{\'a}ticas, Universidad Nacional Aut{\'o}noma de M{\'e}xico, Mexico,
{\tt [garaujo|garcia|christian|urrutia]@matem.unam.mx}}
\footnotetext[4]{Departament de Matem{\`a}tica Aplicada II, Universitat Polit{\`e}cnica de Catalunya, Spain,
{\tt maria.dolores.lara@upc.edu}.}

\renewcommand{\thefootnote}{\arabic{footnote}}

\begin{abstract}
  The pseudoachromatic index of a graph is the maximum number of
  colors that can be assigned to its edges, such that each pair of
  different colors is incident to a common vertex. If for each vertex
  its incident edges have different color, then this maximum is known
  as achromatic index. Both indices have been widely studied.
  A geometric graph is a graph drawn in the plane such that its vertices are
  points in general position, and its edges are straight-line
  segments. In this paper we extend the notion of pseudoachromatic and achromatic indices for
  geometric graphs, and present results for complete geometric graphs.
  In particular, we show that for $n$ points in convex position the achromatic index and the pseudoachromatic index of the complete geometric graph are $\lfloor \tfrac{n^2+n}{4} \rfloor$.
\end{abstract}

\section{Introduction}

A \emph{vertex coloring} with $k$ colors of a simple graph $G$, is a
surjective function that assigns to each vertex of $G$ a color from
the set $\{1, 2, \ldots, k\}$. A coloring is \emph{proper} if any two
adjacent vertices have different color, and it is \emph{complete} if
every pair of colors appears on at least one pair of adjacent
vertices. The \emph{chromatic number $\chi(G)$ of $G$} is the smallest
number $k$ for which there exists a proper coloring of $G$ using $k$
colors. It is not hard to see that any proper coloring of $G$ with
$\chi(G)$ colors is a complete coloring. The \emph{achromatic number}
$\alpha(G)$ of $G$ is the biggest number $k$ for which there exists a
proper and complete coloring of $G$ using $k$ colors. The
\emph{pseudoachromatic number} $\psi(G)$ of $G$ is the biggest number
$k$ for which there exists a complete coloring of $G$ using $k$
colors. Clearly we have that \[\chi(G) \leq \alpha(G) \leq \psi(G).\]

The achromatic number was introduced by Harary, Hedetniemi and Prins
in 1967~\cite{MR0272662}; the pseudoachromatic number was introduced
by Gupta in 1969 \cite{MR0256930}. Several authors have studied these
parameters, and it turns out that the exact determination of the
numbers is quite difficult. For details see \cite{MR2778722,
  MR2450569} and the references therein.

The \emph{chromatic index} $\chi_1(G)$, \emph{achromatic index}
$\alpha_1(G)$ and \emph{pseudoachromatic index} $\psi_1(G)$ of $G$,
are defined respectively as the chromatic number, achromatic number
and pseudoachromatic number of the line graph $L(G)$ of
$G$. Notationally, \mbox{$\chi_1(G) = \chi(L(G))$}, \mbox{$\alpha_1(G)
  = \alpha(L(G))$} and \mbox{$\psi_1(G) = \psi(L(G))$}.

A central topic in Graph Theory is to study the behavior of any
parameter in complete graphs. For instance, the authors of \cite{
  MR2778722} and \cite{MR543176} determined the exact value of
$\alpha_1$ and $\psi_1$ for some specific complete graphs. In this
paper we extend the notion of pseudoachromatic and achromatic indices
to geometric graphs, and present upper and lower bounds for the case
of complete geometric graphs.

The next section introduces geometric graphs, generalizes the different numbers and
indices and provides some basic relations for them. In Section~\ref{sec:convex} we consider the complete
graph where the vertex set is a set of points in convex position, and show lower and upper
bounds of the indices. Finally, we generalize this considerations to point sets in general
position in Section~\ref{sec:general} and give bounds for the geometric pseudoachromatic index.

\section{Preliminaries}

Throughout this paper we assume that all sets of points in the plane are in general
position, that is, no three points are on a common line. Let
$G=(V,E)$ be a simple graph. A \emph{geometric embedding} of G is an injective
function that maps $V$ to a set $S$ of points in the plane, and $E$ to a set of
(possibly crossing) straight-line segments whose endpoints belong to $S$. A
\emph{geometric graph} $\mathsf{G}$ is the image of a particular geometric
embedding of G. For brevity we refer to the points in $S$ as
vertices of $\mathsf{G}$, and to the straight-line segments connecting two points in
$S$ as edges of $\mathsf{G}$. Please note that any set of
points in the plane induces a complete geometric graph. We say that
two edges of $\mathsf{G}$ \emph{intersect} if they have a common
endpoint or they cross. Two edges are \emph{disjoint} if they do not
intersect. A coloring of the edges of $\mathsf{G}$ is \emph{proper} if
every pair of edges of the same color are disjoint. A coloring is
\emph{complete} if each pair of colors appears on at least one pair of
intersecting edges.

The \textit{chromatic index} $\chi_1(\mathsf{G})$ of $\mathsf{G}$ is
the smallest number $k$ for which there exists a proper coloring of the
edges of $\mathsf{G}$ using $k$ colors. The \textit{achromatic index}
$\alpha_1(\mathsf{G})$ of $\mathsf{G}$ is the biggest number $k$ for
which there exists a complete and proper coloring of the edges of
$\mathsf{G}$ using $k$ colors. The \textit{pseudoachromatic index}
$\psi_1(\mathsf{G})$ of $\mathsf{G}$ is the biggest number $k$ for
which there exists a complete coloring of the edges of $\mathsf{G}$
using $k$ colors.

We extend these definitions to graphs in the following way. Let $G$ be
a graph. The \textit{geometric chromatic index} $\chi_g(G)$ of $G$ is
the largest value $k$ for which a geometric graph $\mathsf{H}$ of
$G$ exists, such that $\chi_1(\mathsf{H})=k$. Likewise, the \textit{geometric
  achromatic index} $\alpha_g(G)$ and the \textit{geometric
  pseudoachromatic index} $\psi_g(G)$ of $G$, are defined as the
smallest value $k$ for which a geometric graph $\mathsf{H}$ of $G$
exists such that $\alpha_1(\mathsf{H})=k$ and $\psi_1(\mathsf{H})=k$,
respectively.

From the above definitions we get for graphs

\begin{equation}\chi_1(G)\leq\chi_g(G)\end{equation}

\begin{equation}\chi_1(G)\leq\alpha_1(G)\leq\psi_1(G)\leq\psi_g(G)\end{equation}

\begin{equation}\chi_1(G)\leq\alpha_1(G)\leq\alpha_g(G)\leq\psi_g(G)\end{equation}

and for geometric graphs we obtain

\begin{equation}\label{eq2} \chi_1(\mathsf{G})\leq\alpha_1(\mathsf{G})\leq\psi_1(\mathsf{G}).\end{equation}

Consider the cycle $C_n$ of length $n \geq 3$. In this case
$\chi_1(C_n)$ is equal to $2$ if $n$ is even, and is equal to $3$ if
$n$ is odd. On the other hand, it is not hard to see that $\chi_g(C_n)
= n-1$ if $n$ is even and $\chi_g(C_n)= n$ if $n$ is odd.  However,
$\alpha(C_n)= \alpha_1(C_n) = \alpha_g(C_n) = \max \{k \colon k
\lfloor\frac{k}{2}\rfloor \leq n \} - s(n)$, where $s(n)$ is the
number of positive integer solutions to $n=x^2+x+1$. Also, $\psi (C_n)
= \psi_1(C_n) = \psi_g(C_n) = \max \{k\colon k
\lfloor\frac{k}{2}\rfloor \leq n \}$. These results can be found
in~\cite{MR2450569, MR0441778, MR1811221}.

It is known that if $G$ is a planar graph then there always exists a
geometric embedding $j$, where no two edges of $j(G)$ intersect,
except possibly in a common endpoint \cite{MR0026311}. Therefore,
$\psi_1(G) = \psi_1(j(G))=\psi_g(G)$ and $\alpha_1(G) = \alpha_1(j(G))
= \alpha_g(G)$. However, $\chi_1(G)=\chi_1(j(G)) \leq \chi_g(G)$ (for
instance, and as we mentioned before, $\chi_1(C_4) = 2$ and $\chi_g(C_4)=3$).

The chromatic index of a geometric graph $\mathsf{G}$ has been studied
before. Let $l$ be a positive integer and $I(S)$ the graph in which
one vertex corresponds to one subset of $S$ of size $l$, and one edge
corresponds to two vertices of $\mathsf{G}$ whose respective convex
hulls intersect. This graph was defined in \cite{MR2155418}, where the
authors study its chromatic number for the case when $l=2$. If we
denote by $\mathsf{K}_n$ the complete geometric graph with vertex set
$S$, then for the case $l=2$, $\chi(I(S)) = \chi_1( \mathsf{K}_n)$. In
the same paper the authors define and study the number
$i(n)=\max\{\chi(I(S)) \colon S \subset \mathbb{E}^2 \text{ in general
  position, } |S| = n\}$. Note that for the case $l=2$ it happens that
$i(n) = \chi_g(K_n)$. Recall that by $K_n$ we denote the complete
graph on $n$ vertices. The following theorem appears in
\cite{MR2155418}.

\begin{theorem}
  For each $n \geq 3$: i) If the vertices of $\mathsf{K}_n$ are in
  convex position then $\chi_1( \mathsf{K}_n) = n$, ii) $n \leq
  \chi_g(K_n) \leq c n^{3/2}$for some constant $c > 0$.
\end{theorem}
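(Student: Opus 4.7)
The plan is to prove (i) by exhibiting an explicit proper edge coloring with $n$ colors and matching it with a tailored lower bound, then to derive (ii) from (i) together with a recursive bisection argument. For the upper bound in (i), label the $n$ vertices $0,1,\dots,n-1$ in cyclic order along the convex hull and define $\phi(\{i,j\})=(i+j)\bmod n$. Each color class $\phi^{-1}(c)$ is a family of chords of the form $\{i,(c-i)\bmod n\}$; two such chords, written as $\{i_1,c-i_1\}$ and $\{i_2,c-i_2\}$ with $i_1<i_2$ after reducing into an appropriate interval, satisfy $i_1<i_2\le c-i_2<c-i_1$, so one chord is nested inside the polygon cap cut off by the other. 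Nested chords are vertex-disjoint and non-crossing, so $\phi$ is a proper coloring and $\chi_1(\mathsf{K}_n)\le n$.

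For the lower bound in (i), any non-crossing matching of $n$ convex-position points has at most $\lfloor n/2\rfloor$ edges, so
\[
\chi_1(\mathsf{K}_n)\ \ge\ \left\lceil \binom{n}{2}\big/\lfloor n/2\rfloor \right\rceil,
\]
which equals $n$ when $n$ is odd. For even $n$ the counting bound only yields $n-1$, so I would add the following argument. Suppose towards contradiction that $n-1$ colors suffice; equality in the counting bound forces every color class to be a perfect non-crossing matching. However, a short diagonal $\{i,i+2\}$ isolates vertex $i+1$ on one side, so $i+1$ has no partner reachable without crossing $\{i,i+2\}$. Hence no perfect non-crossing matching contains a short diagonal, contradicting the assumption that the $n$ short diagonals are colored. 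Thus $\chi_1(\mathsf{K}_n)\ge n$ for even $n$ as well.

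For (ii), the lower bound $\chi_g(K_n)\ge n$ is immediate from (i), since the convex embedding witnesses $\chi_1=n$. For the upper bound I would use recursive halving: given $S$ with $|S|=n$ in general position, pick a line $\ell$ splitting $S$ into nearly equal halves $S_1,S_2$, recursively color $\mathsf{K}_{S_1}$ and $\mathsf{K}_{S_2}$ from a shared palette of $f(\lceil n/2\rceil)$ colors (safe because same-color edges from different halves lie in disjoint half-planes and so cannot cross or share a vertex), and then color the bipartite edges with a fresh palette by sorting each side along the direction perpendicular to $\ell$ and assigning $(a_i,b_j)$ the \emph{diagonal} color $j-i$. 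The key geometric lemma is that for two point sets separated by a line, edges $(a_i,b_j)$ and $(a_k,b_l)$ cross iff their indices interleave, so each diagonal class is a non-crossing matching. This uses at most $n-1$ new colors and gives $f(n)\le f(\lceil n/2\rceil)+n-1$, well inside the $cn^{3/2}$ bound.

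The main obstacle is the even case of (i): the edge-counting lower bound is off by one, and the short-diagonal obstruction is what supplies the missing color. In (ii), the remaining technical point is the bipartite crossing lemma, which follows from a short case analysis on the cyclic order of the four endpoints around their convex hull, exploiting that all four lie in the union of two disjoint half-planes bounded by $\ell$.
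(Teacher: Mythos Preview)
First, note that the paper does not actually prove this theorem; it is quoted verbatim from \cite{MR2155418}, so there is no in-paper argument to compare your proposal against. Your treatment of part~(i) is sound: the coloring $\phi(\{i,j\})=i+j\pmod n$ produces parallel chord classes in the regular $n$-gon, hence pairwise disjoint; the counting bound $\lceil\binom{n}{2}/\lfloor n/2\rfloor\rceil$ handles odd $n$; and the short-diagonal obstruction correctly rules out a decomposition into $n-1$ perfect non-crossing matchings when $n$ is even, since any such matching must leave an even number of vertices on each side of every matching edge.

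For part~(ii) there is a genuine slip. You sort each side ``along the direction perpendicular to~$\ell$'', but with that choice your bipartite non-crossing lemma is false. Take $\ell$ to be the $y$-axis and $a_1=(-2,5)$, $a_2=(-1,0)$, $b_1=(1,0)$, $b_2=(2,5)$; sorted by the coordinate perpendicular to~$\ell$ (that is, by $x$) these are correctly indexed, yet the same-diagonal segments $a_1b_1$ and $a_2b_2$ cross at $(0,\tfrac{5}{3})$. The statement becomes true if you instead sort by the coordinate \emph{along}~$\ell$: with $\ell$ vertical and both sides sorted by $y$, whenever $i<k$ and $j<l$ the convex hull of $\{a_i,a_k,b_j,b_l\}$ has counterclockwise cyclic order $a_k,a_i,b_j,b_l$ (the left arc is traversed top-to-bottom, the right arc bottom-to-top), so $a_ib_j$ and $a_kb_l$ are hull edges rather than diagonals and cannot cross; the degenerate case where one point lies inside the triangle of the other three is also easily seen to be crossing-free. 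With this correction your recursion $f(n)\le f(\lceil n/2\rceil)+(n-1)$ does go through, but observe that it then solves to $f(n)\le 2n$, which is strictly stronger than the $cn^{3/2}$ upper bound you set out to establish and stronger than what the cited reference proves.
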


In this paper we prove:

\begin{theorem}\label{thm:big}
i) For each $n \neq 4$, if the vertices of $\mathsf{K}_n$ are in convex position then
\[\alpha_1( \mathsf{K}_n) = \psi_1(\mathsf{K}_n) = \lfloor
\tfrac{n^2+n}{4} \rfloor,\]
ii) For each $n > 18$, $0.0710n^2-\Theta (n) \leq \psi_g(K_n) \leq 0.1781n^{2}+\Theta(n^{\frac{3}{2}}) $.
\end{theorem}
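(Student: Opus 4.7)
The plan is to treat the two parts of the theorem separately, since they require rather different techniques.

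\textbf{Part~(i) (convex position).} By \eqref{eq2} we have $\alpha_1(\mathsf{K}_n)\le\psi_1(\mathsf{K}_n)$, so the claimed equality reduces to the two bounds
\[
\alpha_1(\mathsf{K}_n)\ \ge\ \left\lfloor\tfrac{n^{2}+n}{4}\right\rfloor
\quad\text{and}\quad
\psi_1(\mathsf{K}_n)\ \le\ \left\lfloor\tfrac{n^{2}+n}{4}\right\rfloor.
\]
For the lower bound I would give an explicit proper complete edge coloring. Label the vertices cyclically as $v_0,\dots,v_{n-1}$ and partition the edges by their cyclic length $\ell(v_iv_j)=\min\{|i-j|,\,n-|i-j|\}$; each length class is a geometric $n$-cycle (and a perfect matching when $n$ is even and $\ell=n/2$). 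Within each length class I would group the edges into small blocks---mostly pairs of mutually non-crossing edges, with some singletons dictated by parity---so that each block remains a geometric matching. A direct count then gives exactly $\lfloor(n^{2}+n)/4\rfloor$ blocks, and completeness (every two blocks contain a vertex-sharing or crossing edge pair) follows from the cyclic symmetry together with a short case analysis. The value $n=4$ must be excluded because the only disjoint edge pairs in $\mathsf{K}_4$ obstruct the required parity balance.

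For the upper bound the naive double count $\binom{k}{2}\le n\binom{n-1}{2}+\binom{n}{4}$ is loose (it gives only $k\lesssim n^{2}/\sqrt{12}\approx 0.289\,n^{2}$). To reach the sharp $n^{2}/4$ I would bucket the color classes by size, exploit that a class of size $c_i$ can witness at most a quantity of other colors controlled by the intersection degrees of its edges in convex position, and apply convexity to the intersection budget under the constraint $\sum_i c_i=\binom{n}{2}$. A structural induction on $n$, deleting a vertex and examining the induced coloring on $\mathsf{K}_{n-1}$, is a natural backup route.

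\textbf{Part~(ii) (general position).} For the upper bound on $\psi_g(K_n)$ it is enough to exhibit a single embedding $\mathsf{K}_n$ with small $\psi_1$; I would use one realising the currently best known upper bound on the rectilinear crossing number of $K_n$, say $\operatorname{cr}(\mathsf{K}_n)\le c_0\binom{n}{4}$ with $c_0\approx 0.3805$. The inequality $\binom{k}{2}\le\operatorname{cr}(\mathsf{K}_n)+n\binom{n-1}{2}$ then yields $k\le\sqrt{2c_0/24}\,n^{2}+O(n^{3/2})\approx 0.1781\,n^{2}+\Theta(n^{3/2})$. For the lower bound $\psi_g(K_n)\ge 0.0710\,n^{2}-\Theta(n)$, which must hold for \emph{every} embedding, I would design a combinatorial construction based on the cyclic angular order of the edges around each vertex---which is always convex-like locally, independent of the global embedding---and adapt the block-construction of part~(i) to this local setting. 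Optimising the block-size parameters is where the constant $0.0710$ should emerge.

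\textbf{Main obstacle.} The sharp upper bound in part~(i) is the most delicate step: plain counting loses a constant factor, and closing the gap seems to require genuinely geometric input on the intersection neighborhoods of edges in convex position, combined with a careful balancing of the class-size distribution. The lower bound in part~(ii) is a close second, since it must be robust against an adversarial point set rather than the friendly convex one.
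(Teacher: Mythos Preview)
Your assessment of where the difficulty lies is inverted, and this leads to a genuine gap.

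\textbf{Part~(i), upper bound.} You call this ``the most delicate step'' and propose a class-size bucketing argument or an induction. In fact the paper dispatches it in a few lines using a single geometric input you do not mention: the classical result (Theorem~\ref{teo:erdos} in the paper, due to Erd\H{o}s and Perles) that any geometric graph on $n$ vertices with $n+1$ edges contains two disjoint edges. This immediately caps the number of singleton color classes at $n$ (Corollary~\ref{cor:size-one}). Hence if $k$ colors are used, at least $k-n$ classes have size $\ge 2$, so $n+2(k-n)\le\binom{n}{2}$, i.e.\ $k\le\lfloor (n^{2}+n)/4\rfloor$. No convexity of the point set is needed here; the bound holds for every geometric graph of order $n$. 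Your ``naive double count'' and your proposed refinements are both unnecessary and, without the thrackle bound, unlikely to reach the sharp constant.

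\textbf{Part~(i), lower bound.} This is where the actual work is. Your sketch (partition by cyclic length, pair up edges) is in the right spirit, but the hard part is guaranteeing \emph{completeness}: that every two blocks intersect. The paper handles this via a structural notion of \emph{halving pairs} (pairs of disjoint edges whose convex hull contains a halving edge), proves once that any two halving pairs intersect (Lemma~\ref{lem:halving_intersect}), and then arranges the coloring so that almost every class is a halving pair. The residual classes require a careful case split on $n\bmod 4$. Your ``short case analysis from cyclic symmetry'' underestimates this.

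\textbf{Part~(ii), lower bound.} Your plan to use the local angular order around vertices does not match the paper and is unlikely to work: the bound must hold for \emph{every} point set, and local rotational data does not control whether two far-apart color classes cross. The paper's construction is genuinely geometric: it invokes a six-partition theorem (three concurrent lines splitting the set into six nearly equal wedges), builds $12m^{2}$ small subgraphs each of which is a quadrilateral containing the common concurrency point $p$ plus one pendant edge reaching outside, and shows any two such subgraphs must intersect because their quadrilaterals are nested or crossing around $p$. The constant $0.0710\approx 12/169$ comes from this $13m$-point budget, not from optimizing block sizes.

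Your upper bound for part~(ii) via the rectilinear crossing number is correct and matches the paper.
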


\section{Points in convex position}\label{sec:convex}

In this section we prove Claim i) of Theorem~\ref{thm:big}. In Subsection~\ref{sec:upper} we present an upper bound for $\psi_1(\mathsf{G})$ for any geometric graph $\mathsf{G}$; and then in Subsection~\ref{sec:lower} we exclusively work with point sets in convex position, and derive a tight lower bound for  $\alpha_1(\mathsf{K}_n)$.

\subsection{Upper bound: $\psi_1(\mathsf{G}) \leq \lfloor \tfrac{n^2+n}{4} \rfloor$}\label{sec:upper}

The following theorem was shown in \cite{MR0015796}.

\begin{theorem}\label{teo:erdos}
Any geometric graph with $n$ vertices and $n+1$ edges, contains two disjoint edges.
\end{theorem}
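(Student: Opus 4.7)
The plan is to prove the contrapositive: if every two edges of a geometric graph $\mathsf{G}$ on $n$ vertices intersect (share an endpoint or cross), then $|E(\mathsf{G})|\leq n$. I induct on $n$, with the base cases $n\leq 3$ immediate since $\binom{n}{2}\leq n$ in that range.

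For the inductive step, let $\mathsf{G}$ be a pairwise-intersecting geometric graph on $n$ vertices. If some vertex $v$ satisfies $\deg(v)\leq 1$, then $\mathsf{G}-v$ is still pairwise-intersecting on $n-1$ vertices, so by induction $|E(\mathsf{G}-v)|\leq n-1$; reinstating $v$ contributes at most one edge, giving $|E(\mathsf{G})|\leq n$ and closing that case. Hence one may assume $\delta(\mathsf{G})\geq 2$ and argue for a contradiction under the assumption $|E(\mathsf{G})|\geq n+1$.

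Under this assumption, the handshake lemma forces some vertex to have degree at least $3$. A short separate lemma shows that if \emph{every} pair of edges of $\mathsf{G}$ shares an endpoint, then $\mathsf{G}$ is a subgraph of a star or of a triangle, so $|E(\mathsf{G})|\leq\max\{n-1,3\}<n+1$ for $n\geq 4$, contradicting the count. Therefore $\mathsf{G}$ contains a genuine crossing pair, say $e_1=ab$ and $e_2=cd$, meeting at an interior point. The supporting lines of $e_1$ and $e_2$ partition the plane into four open regions $R_1,\dots,R_4$, and the general-position assumption ensures that every other vertex of $\mathsf{G}$ lies in exactly one of these regions. Every remaining edge of $\mathsf{G}$ must either have an endpoint in $\{a,b,c,d\}$ or cross both $e_1$ and $e_2$, the latter forcing its endpoints into opposite regions. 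A careful count of how many vertices can sit in each $R_i$ and how many mutually intersecting ``crossing'' edges can join opposite pairs of regions then yields $|E(\mathsf{G})|\leq n$, the sought contradiction.

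The main obstacle is precisely this last bookkeeping: bounding the number of edges between opposite regions while respecting the constraint that those edges must also pairwise intersect one another and each of the at most four edges incident to $\{a,b,c,d\}$. A slightly cleaner alternative to the quadrant analysis is to pick a vertex $p$ on the convex hull of the vertex set of $\mathsf{G}$, note that its incident edges form an angular fan, and observe that every edge not incident to $p$ must cross every edge of that fan; iterating this observation, or inducting on $\deg(p)$, yields the same bound $|E(\mathsf{G})|\leq n$ without an explicit region split.
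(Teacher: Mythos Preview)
The paper does not prove this theorem at all; it is quoted from \cite{MR0015796} and used as a black box (to derive Corollary~\ref{cor:size-one} and Theorem~\ref{thm:upper}). There is therefore no ``paper's own proof'' to compare your argument to.

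On its own merits, your proposal is a plan rather than a proof. The induction framework and the reduction to $\delta(\mathsf{G})\geq 2$ are fine, and the observation that a graph in which every two edges share an endpoint is contained in a star or a triangle is correct. But the argument stops precisely at the non-trivial step: after fixing a crossing pair $e_1,e_2$ and splitting the plane into four regions, you assert that ``a careful count \ldots\ yields $|E(\mathsf{G})|\leq n$'' and immediately concede that this bookkeeping is ``the main obstacle.'' Nothing in the text actually bounds the number of edges between opposite regions, nor the edges incident to $\{a,b,c,d\}$, nor how these two families interact; the count is simply missing. Your alternative sketch has the same defect and, in addition, a false claim: it is not true that every edge not incident to the hull vertex $p$ must \emph{cross} every edge of the fan at $p$, since such an edge may share an endpoint $w_i$ with some $pw_i$ and meet it there rather than crossing it. The standard convex-hull proofs (Hopf--Pannwitz, Erd\H{o}s, or the version usually attributed to Perles) require a more careful choice of an extreme edge and a genuine argument at exactly this point; you would need to supply that argument to have a complete proof.
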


Using this theorem we obtain the following result, where the order of
a graph denotes the number of its vertices.

\begin{cor}\label{cor:size-one}
  Let $\mathsf{G}$ be a geometric graph of order $n$. There are at
  most $n$ chromatic classes of size one in any complete coloring of
  $\mathsf{G}$.
\end{cor}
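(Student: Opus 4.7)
The plan is to prove the corollary by contradiction, directly invoking Theorem~\ref{teo:erdos}. Suppose, toward a contradiction, that some complete coloring of $\mathsf{G}$ produces at least $n+1$ chromatic classes each of size one; call the single representative edges $e_1, e_2, \dots, e_{n+1}$, with pairwise distinct colors $c_1,\dots,c_{n+1}$.

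The key observation is the following: if $c_i$ is a color whose only edge is $e_i$, and $c_j$ is a color whose only edge is $e_j$, then completeness of the coloring forces the pair $(c_i,c_j)$ to appear on some pair of intersecting edges; but the only edges with colors $c_i$ and $c_j$ are $e_i$ and $e_j$ themselves. Hence $e_i$ and $e_j$ must intersect (share an endpoint or cross). So the subgraph $\mathsf{H}$ of $\mathsf{G}$ formed by these $n+1$ edges on at most $n$ vertices has the property that every pair of its edges intersects, i.e.\ no two edges of $\mathsf{H}$ are disjoint.

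Now I would apply Theorem~\ref{teo:erdos} to $\mathsf{H}$. Since $\mathsf{H}$ has $n$ vertices (those of $\mathsf{G}$) and $n+1$ edges, the theorem guarantees the existence of two disjoint edges in $\mathsf{H}$, contradicting the pairwise-intersection property derived above. This contradiction shows that no complete coloring can have more than $n$ singleton chromatic classes.

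The argument is essentially a one-step reduction to Erdős's theorem, so there is no serious obstacle; the only point that needs care is correctly reading the definition of a \emph{complete} geometric coloring (each pair of colors must appear on a pair of \emph{intersecting} edges, where intersecting means sharing an endpoint or crossing) to justify why two singleton classes force their edges to intersect rather than merely to share a vertex. Once that is in place, the count $n+1$ edges on $n$ vertices is exactly what triggers Theorem~\ref{teo:erdos}.
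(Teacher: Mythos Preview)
Your proof is correct and matches the paper's intended argument: the corollary is stated as an immediate consequence of Theorem~\ref{teo:erdos}, and your contradiction argument (the $n+1$ singleton edges must pairwise intersect by completeness, contradicting Theorem~\ref{teo:erdos}) is exactly the reasoning the paper leaves implicit. There is nothing to add.
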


This corollary immediately implies an upper bound on
$\psi_1(\mathsf{G})$.

\begin{theorem}\label{thm:upper}
  Let $\mathsf{G}$ be a geometric graph of order $n$. The
  pseudoachromatic index $\psi_1(\mathsf{G})$ of $\mathsf{G}$ is at
  most $\lfloor \frac{n^2+n}{4} \rfloor$.

\begin{proof}
  We proceed by contradiction. Assume there exists a geometric graph
  $\mathsf{G}$ for which a complete coloring using $ \lfloor
  \frac{n^2+n}{4} \rfloor + 1$ colors exist. This coloring must have
  at most $\binom{n}{2} - \left( \lfloor \frac{n^2+n}{4} \rfloor + 1
  \right)$ chromatic classes of cardinality larger than one. Thus,
  there are at least $ \lfloor \frac{n^2+n}{4} \rfloor + 1 - \left(
    \binom{n}{2} - \lfloor \frac{n^2+n}{4} \rfloor -1 \right)$
  chromatic classes of size one, that is:

\begin{equation}
    1 - \left(  \binom{n+1}{2}  -2 \left \lfloor \frac{\binom{n+1}{2}}{2}  \right \rfloor  \right)  +n + 1   =
    \begin{cases}
        n + 1 & \text{if  } \binom{n+1}{2} \text{ is odd},\\
        n + 2 & \text{if } \binom{n+1}{2} \text{ is even}.
    \end{cases}
\end{equation}

This contradicts Corollary~\ref{cor:size-one} and therefore the
theorem follows.
\end{proof}
\end{theorem}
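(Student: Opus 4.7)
The plan is to argue by contradiction, combining Corollary~\ref{cor:size-one} (which packages the geometric input from Theorem~\ref{teo:erdos}) with a standard edge count. The bound $\psi_1(\mathsf{G})\le \lfloor\tfrac{n^2+n}{4}\rfloor$ is really a pigeonhole statement once one knows that a complete coloring cannot have too many singleton color classes: if too many colors are used, most classes must be singletons, and Corollary~\ref{cor:size-one} forbids this.

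Concretely, I would suppose that $\mathsf{G}$ on $n$ vertices admits a complete coloring using $k:=\lfloor\tfrac{n^2+n}{4}\rfloor+1$ colors, and let $s$ denote the number of chromatic classes of size exactly one. Each of the remaining $k-s$ classes contains at least two edges, so double-counting the edges yields
\[
s + 2(k-s) \;\le\; |E(\mathsf{G})| \;\le\; \binom{n}{2},
\]
equivalently $s \ge 2k - \binom{n}{2}$. Substituting the chosen value of $k$ and simplifying, a short case analysis on the parity of $\binom{n+1}{2}$ (equivalently, on $n \bmod 4$) shows $s\ge n+1$, with $s\ge n+2$ when $\binom{n+1}{2}$ is even. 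In either case $s>n$, contradicting Corollary~\ref{cor:size-one}.

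There is no real obstacle here: the geometric content is absorbed into Corollary~\ref{cor:size-one}, and what remains is elementary arithmetic. The one point that demands a little care is the interaction between the floor in the definition of $k$ and the parity of $n^2+n \pmod 4$, which is precisely what produces the two cases in the final calculation; once that is observed, the contradiction drops out and the theorem follows.
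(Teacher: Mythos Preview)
Your proposal is correct and follows essentially the same approach as the paper's proof: both argue by contradiction, use an edge-count to force at least $2k-\binom{n}{2}\ge n+1$ singleton classes when $k=\lfloor\tfrac{n^2+n}{4}\rfloor+1$, and then invoke Corollary~\ref{cor:size-one}. The only difference is cosmetic---the paper bounds the number of non-singleton classes first and subtracts, while you bound the singletons directly---but the arithmetic and the parity case split on $\binom{n+1}{2}$ are identical.
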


\subsection{Tight lower bound: $\alpha_1(\mathsf{G}) \geq \lfloor \frac{n^2+n}{4} \rfloor$}\label{sec:lower}

In this subsection we prove that the bound presented in
Theorem~\ref{thm:upper} is tight. To derive
the lower bound we use a complete geometric graph induced by a set of points
in convex position. We call this type of graph a \emph{complete convex
  geometric graph}. The crossing pattern of the edge set of a complete
convex geometric graph depends only on the number of vertices, and not
on their particular position. Without loss of generality we therefore
assume that the point set of the graph corresponds to the vertices of
a regular polygon. In the remainder of this section we exclusively
work with this type of graphs.

To simplify the proof of the main statement of this section, in the
following we will define different sets of edges and prove some
important properties of these sets.

Let $\mathsf{G}$ be a complete convex geometric graph of order $n$,
and let $\{1, \ldots, n\}$ be the vertices of the graph listed in
clockwise order. For the remainder of this subsection it is important to
bear in mind that all sums are taken modulo $n$; for the sake of
simplicity we will avoid writing this explicitly.  We denote by
$e_{i,j}$ the edge between the vertices $i$ and $j$. We call an edge
$e_{i,j}$ a \emph{halving edge} if in both of the two open semi-planes
defined by the line containing $e_{i,j}$, there are at least $\lfloor
\frac{n-2}{2}\rfloor$ points of $\mathsf{G}$. Using this concept we
obtain the following definition.

\begin{defini}
  Let $i,j,k \in \{1,\ldots,n\}$, such that $e_{i,j}$ and $e_{j+1,k}$
  do not intersect. We call a pair of edges $(e_{i,j}, e_{j+1,k})$ a
  \emph{halving pair of edges} (\emph{halving pair}, for short) if at
  least one of $e_{i,j+1}$, $e_{i,k}$, or $e_{j,k}$ is a halving
  edge. This halving edge is called the \emph{witness} of the halving
  pair.
\end{defini}

See \figurename~\ref{fig:quad} for an example of a halving
pair $(e_{i,j}, e_{j+1,k})$, with $e_{i,k}$ as witness. Note that a halving pair may have more than one witness.

\begin{figure}
  \begin{center}
    \includegraphics{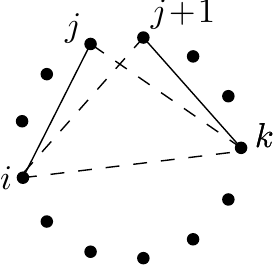}
    \caption{Example for $n=13$. Edges of the halving pair $(e_{i,j},
      e_{{j+1},k})$ are shown solid, dashed edges represent the
      possible halving edges, of which $e_{i,k}$ is a halving edge
      (the witness) in the shown example. The convex hull of a pair
      $(e_{i,j}, e_{{j+1},k})$ is always a quadrilateral.}
    \label{fig:quad}
  \end{center}
\end{figure}

We say that an edge $e$ intersects a pair of edges $(f,g)$ if $e$
intersects at least one of $f$ or $g$. We say that two pairs of edges
intersect if there is an edge in the first pair which intersects the
second pair.

\begin{lemma}\label{lem:halving_intersect}
  Let $\mathsf{G}$ be a complete convex geometric graph of order
  $n$. i) Each two halving edges intersect. ii) Any halving edge
  intersects any halving pair of edges. iii) Any two halving pairs
  intersect.
\end{lemma}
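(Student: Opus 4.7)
The plan is to derive all three assertions by contradiction, using just two elementary ingredients: first, that in convex position two chords cross if and only if their four endpoints appear in alternating cyclic order on the polygon; and second, that a halving edge partitions the remaining $n-2$ vertices into two open arcs, each of size in $\{\lfloor(n-2)/2\rfloor, \lceil(n-2)/2\rceil\}$.

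For (i), I would assume that halving edges $h_1, h_2$ do not intersect. Then they share no endpoints and their endpoints do not alternate cyclically, so both endpoints of $h_2$ must lie in a single arc of $h_1$, of size $s \leq \lceil(n-2)/2\rceil$. One arc of $h_2$ is then contained strictly inside that arc, having size at most $s-2 \leq \lceil(n-2)/2\rceil - 2 < \lfloor(n-2)/2\rfloor$, contradicting the halving property of $h_2$.

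For (ii), let $(e_{i,j}, e_{j+1,k})$ be a halving pair with witness $w$, and let $h$ be a halving edge assumed not to intersect the pair. Then $h$ shares no endpoint with the pair and crosses neither $e_{i,j}$ nor $e_{j+1,k}$; denoting by $A_1, A_3, A_4$ the three open arcs (clockwise $i$-to-$j$, $j+1$-to-$k$, $k$-to-$i$) of sizes $\alpha, \beta, \gamma$ with $\alpha+\beta+\gamma = n-4$, this forces both endpoints of $h$ to lie in a common arc. Supposing both lie in the arc of size $\alpha$, one arc of $h$ has size at most $\alpha-2$, so the halving property of $h$ yields $\alpha \geq \lfloor(n-2)/2\rfloor + 2$. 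The halving property of the witness $w$ (one of $e_{i,j+1}, e_{i,k}, e_{j,k}$) then gives a second inequality bounding $\beta$, $\gamma$, or $\beta+\gamma$ from below by $\lfloor(n-2)/2\rfloor$ up to an additive constant of at most $2$; summing with the first yields $\alpha+\beta+\gamma > n-4$, a contradiction. The other two placements of $h$'s endpoints are handled symmetrically.

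For (iii), let $P_1$ and $P_2$ be halving pairs with vertex sets $V_1 = \{i,j,j+1,k\}$, $V_2 = \{i',j',j'+1,k'\}$ and witnesses $w_1, w_2$. If $V_1 \cap V_2 \neq \emptyset$, any common vertex $v$ is an endpoint of a (unique) edge in each of $P_1, P_2$, and these two edges intersect at $v$; so I may assume $V_1 \cap V_2 = \emptyset$. By (ii), $w_1$ intersects $P_2$, and the disjointness forces $w_1$ to cross some edge $f \in P_2$. The key step is then to observe that, for each of the three possible witnesses $w_1 \in \{e_{i,j+1}, e_{i,k}, e_{j,k}\}$, this crossing already forces one of the two pair-$1$ edges $e_{i,j}$ or $e_{j+1,k}$ to also cross $f$: the endpoint of $f$ on the ``quadrilateral side'' of $w_1$ must lie in $A_1$, $A_3$, or $A_1 \cup A_3$ (depending on the witness), and in each subcase the four relevant vertices $\{i,j,c,d\}$ or $\{j+1,k,c,d\}$ (writing $f = e_{c,d}$) alternate cyclically on the polygon, yielding the required crossing between a pair-$1$ edge and $f$. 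This contradicts the assumption that $P_1$ and $P_2$ do not intersect.

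The chief technical hurdle is the cyclic-order case check in (iii): once (ii) has reduced the problem to the situation where $w_1$ crosses some pair-$2$ edge $f$, one must verify case-by-case (three witness types) that $f$ is also crossed by one of the pair-$1$ edges. Each check is a short alternation test, but the bookkeeping of arcs and witnesses is what makes the argument slightly delicate; the convex structure is what keeps the number of subcases bounded.
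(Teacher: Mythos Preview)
Your argument is correct, but it takes a more laborious route than the paper for parts~(ii) and~(iii). For~(i) the two proofs are essentially identical counting arguments. For~(ii) and~(iii), however, the paper isolates a single structural fact and uses it twice: the witness $w$ of a halving pair $(e_{i,j},e_{j+1,k})$ lies inside the convex quadrilateral $(i,j,j+1,k)$, and any edge $f$ that intersects $w$ (or indeed $e_{i,k}$ or either diagonal) must also intersect one of $e_{i,j}$, $e_{j+1,k}$. Given this observation, (ii) follows immediately from~(i) (the halving edge $h$ meets the witness, hence meets the pair), and (iii) follows by applying the observation twice (the two witnesses meet by~(i); then one witness meets the other pair; then an edge of the first pair meets that same edge of the second pair).

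You effectively rediscover this observation in your proof of~(iii)---your ``key step'' case-check is exactly the statement that an edge crossing $w_1$ must cross an edge of $P_1$---but you do not exploit it in~(ii), where instead you run a separate arc-counting argument with the parameters $\alpha,\beta,\gamma$. That argument is valid (all nine arc/witness combinations do yield $\alpha+\beta+\gamma>n-4$), but it duplicates work: had you stated the containment observation first, both~(ii) and~(iii) would reduce to~(i) in one line each, with no case analysis beyond the single verification of the observation itself. Also note that your ``handled symmetrically'' in~(ii) is slightly glib, since $A_4$ is not symmetric to $A_1$ or $A_3$; the inequalities still close, but the symmetry you invoke only exchanges $A_1$ and $A_3$.
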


\begin{proof}
  To prove Claim~i) assume that there are two halving edges which do
  not intersect. These edges divide the set of vertices of
  $\mathsf{G}$ into two disjoint sets of size at least $\lfloor
  \frac{n-2}{2} \rfloor$ and one set of size at least $4$ (the
  vertices of the two halving edges). Then, the total number of
  vertices is:

\begin{equation}
    2 \left \lfloor \frac{n-2}{2} \right \rfloor + 4 =
    \begin{cases}
        n + 1 & \text{if n is odd} \\
        n + 2 & \text{if n is even}
    \end{cases}
\end{equation}

This is a contradiction, which proves Claim~i).

To prove Claims~ii) and~iii) observe that the convex hull of each
halving pair $(e_{i,j}, e_{j+1,k})$ defines a quadrilateral
$(i,j,j+1,k)$, see Figure~\ref{fig:quad}. The halving edge witnessing
the halving pair is contained in the corresponding convex hull: it is
either the edge $e_{i,k}$, or one of the diagonals of the
quadrilateral.

It is easy to see, that if either $e_{i,k}$ or one of the diagonals
is intersected by an edge $f$, then $f$ also intersects at least one
edge of the pair $(e_{i,j}, e_{j+1,k})$.

Using this observation we prove the remaining two cases by
contradiction: Assume there exists a halving edge and a halving pair
which do not intersect, or two halving pairs which do not intersect.
Then their corresponding halving edges (witnesses) do not intersect
either, because they are contained in the quadrilaterals. This
contradicts Claim~i), and thus proves Claim~ii) and~iii).
\end{proof}

\begin{defini}
  Let $\mathsf{G}$ be a complete convex geometric graph of even order
  $n$. We call an edge $e_{i,j}$ an \emph{almost-halving edge} if
  $e_{i,j+1}$ is a halving edge.
\end{defini}

Please observe that this definition and the following lemma are only
stated (and valid) for even $n$; therefore if $e_{i,j}$ is an almost-halving edge, $e_{i-1,j}$ is a halving edge.

\begin{lemma}\label{lem:almost_intersect}
  Let $\mathsf{G}$ be a complete convex geometric graph of even order
  $n$. Let $f$ be an almost-halving edge, $e$ a halving edge, and $E$ a
  halving pair. i) $f$ and $e$ intersect, ii) $f$ and $E$ intersect.
\end{lemma}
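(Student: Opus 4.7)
The plan is to reduce both claims to short arc-length arguments on the convex hull, exploiting two structural facts that hold because $n$ is even: the halving edges are exactly the ``diameters'' $e_{c,c+n/2}$, and an almost-halving edge $e_{i,j}$ is exactly a chord whose endpoints are at shorter cyclic distance $n/2-1$ (since $e_{i,j+1}$ being halving forces $j+1-i\equiv n/2\pmod n$).

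For Claim~i), I argue by contradiction, mirroring the style of Claim~i) of Lemma~\ref{lem:halving_intersect}. Let $f=e_{a,b}$ be almost-halving, so that $a$ and $b$ are at shorter cyclic distance $n/2-1$, and let $e=e_{c,c+n/2}$ be an arbitrary halving edge. If $f$ and $e$ did not intersect, they would share no endpoint and would not cross, so both $a$ and $b$ would lie in one of the two open arcs determined by $e$. But each of these arcs consists of only $n/2-1$ consecutive vertices, so any two of its vertices are at cyclic distance at most $n/2-2$, contradicting the fact that $a$ and $b$ sit at cyclic distance exactly $n/2-1$.

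For Claim~ii), I will combine Claim~i) with the observation highlighted inside the proof of Lemma~\ref{lem:halving_intersect}: any edge that intersects the witness of a halving pair also intersects the pair itself, because the witness lies inside the convex quadrilateral hull of the pair. So if $E$ is a halving pair with witness $w$, then $w$ is a halving edge by definition, Claim~i) gives that the almost-halving edge $f$ intersects $w$, and hence $f$ intersects $E$.

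The only step requiring any care is the arc-length bookkeeping in Claim~i): one has to make sure that a single count (placing both endpoints of $f$ inside one open arc of size $n/2-1$) simultaneously rules out the ``share a vertex'' and the ``cross'' configurations. Once this is set up, Claim~ii) is essentially a free application of a lemma already in the paper, so I do not expect any deeper obstacle.
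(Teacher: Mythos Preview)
Your proposal is correct and follows essentially the same route as the paper. For Claim~i) the paper also argues by contradiction---phrasing it as a vertex count (two arcs of size $\tfrac{n-2}{2}$ and $\tfrac{n-2}{2}-1$ plus the four endpoints exceed $n$) rather than your equivalent cyclic-distance formulation---and for Claim~ii) the paper likewise applies Claim~i) to the witness and then invokes the same ``witness inside the quadrilateral'' observation from the proof of Lemma~\ref{lem:halving_intersect}.
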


\begin{proof}
  We prove Claim~i) by contradiction. If $e$ and $f$ do not intersect,
  then they divide the set of vertices of $\mathsf{G}$ into three
  sets: one of size at least $\tfrac{n-2}{2}$, one of size at least
  $\frac{n-2}{2}-1$, and one of size at least $4$. In total the number
  of vertices is (at least):

  \begin{equation}
    2 \left (\frac{n-2}{2} \right )-1 + 4 = n + 1
  \end{equation}

  This is a contradiction, which proves Claim~i). To prove Claim~ii) we
  use Claim~i): the halving edge witnessing $E$ must intersect $f$. On
  the other hand such a halving edge is inside the convex hull of $E$,
  see Figure~\ref{fig:quad}. From these two observations it follows
  that $E$ and $f$ intersect.
\end{proof}

We need two more concepts from the literature. A \emph{straight-line
  thrackle} \cite{MR1476318} of $\mathsf{G}$ is a subset of edges of
$\mathsf{G}$ with the property that any two distinct edges intersect
(they have a common endpoint or they cross). A straight-line
  thrackle is \emph{maximal} if it is not a proper subset of any other thrackle.
Theorem~\ref{teo:erdos}
implies that the size of any straight-line thrackle of $\mathsf{G}$ is
at most $n$. In the following we always refer to a straight-line
thrackle as thrackle, since we are only working with geometric
embeddings of graphs.

Given a set $J \subseteq \{1, \ldots, \lfloor \frac{n}{2}\rfloor\}$, a
\emph{circulant graph} 
$C_n(J)$ of $G$ is defined as the
graph with vertex set equal to $V(G)$\footnote{This definition is different from that usually given, in which the vertex set of the graph is $\mathbb{Z}_n$. However, as it is not hard to see that this definition is equivalent to the usual one, and to keep our arguments as simple as possible, we opt for this choice.}
and $E(C_n(J)) =
\{e_{i,j} \in E(G) \colon j-i \equiv k \mod n, \text{ or }
j-i \equiv -k \mod n\mbox{, } k \in J\}$.
In this paper we use this concept for geometric graphs, in the natural way;
see \figurename~\ref{fig:circulant-third}~(left) for an example of a
circulant geometric graph $C_n(J)$ with
$J=\{\left\lfloor\frac{n}{2}\right\rfloor-1\}$ and $n=13$.

  \begin{figure}[tb]
    \begin{center}
      \includegraphics{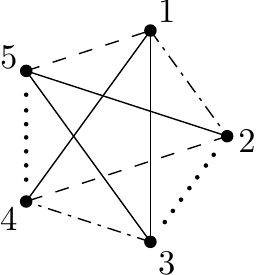}
      \caption{Proof of Theorem~\ref{lowbound} for $n=5$: $\alpha_1(\mathsf{K}_5) = 7$.
      Edges $e_{1,3}, e_{3,5}, e_{4,1}, e_{5,2}$ (solid) are colored with colors $1$ to $4$, respectively.
      Edges $e_{1,2}, e_{3,4}$ are colored with color $5$; and edges $e_{2,3}, e_{4,5}$ are colored with color $6$.
      Finally, edges $e_{2,4},e_{1,5}$ are colored with color $7$.
      Each pair of chromatic classes intersect, and each pair of edges of the same color are disjoint.}
      \label{fig:k5}
    \end{center}
  \end{figure}

\begin{figure}[htb]
  \begin{center}
    \includegraphics{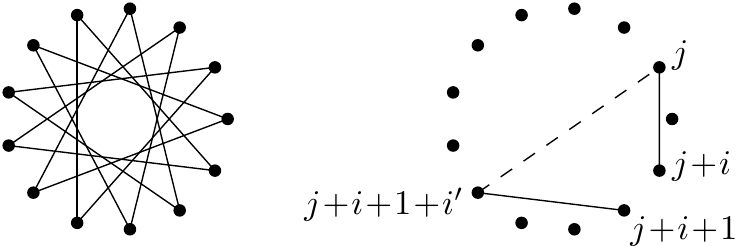}
    \caption{Examples for $n=13$. Left: A circulant graph
      $C_n\left(\left\{ \left\lfloor\frac{n}{2}\right\rfloor-1
        \right\}\right)$. Right: A pair of edges (solid)
      with same color from $E(C_n(\{i,i'\}))$, with $i=2$ and some
      fixed $j$. The witness of the halving pair is shown dashed.}
    \label{fig:circulant-third}
  \end{center}
\end{figure}

The following theorem provides the lower bound on the achromatic index.

\begin{theorem}\label{lowbound}
  Let $\mathsf{G}$ be a complete convex geometric graph of order
  $n \neq 4$. The achromatic index of $\mathsf{G}$ satisfies the following
  bound: \[\alpha_1(\mathsf{G}) \geq \lfloor \tfrac{n^2+n}{4}
  \rfloor.\]
\end{theorem}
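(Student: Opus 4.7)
I plan to exhibit, for every $n \neq 4$, an explicit proper and complete coloring of the edges of $\mathsf{G}$ using exactly $\lfloor (n^2+n)/4 \rfloor$ colors. Every color class will be either a \emph{singleton} or a \emph{halving pair}. Properness is automatic from the definition of halving pair. Completeness will follow from the intersection lemmas once the singletons themselves form a thrackle: Lemma~\ref{lem:halving_intersect} handles halving vs.\ halving, halving vs.\ halving-pair, and halving-pair vs.\ halving-pair; Lemma~\ref{lem:almost_intersect} handles almost-halving edges against halving edges and halving-pairs.

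Writing $a$ for the number of singletons and $b$ for the number of pair classes, the equations $a + b = \lfloor (n^2+n)/4 \rfloor$ and $a + 2b = \binom{n}{2}$ force $a = n$ for $n \equiv 0, 3 \pmod 4$ and $a = n-1$ for $n \equiv 1, 2 \pmod 4$. When $n$ is odd, the $n$ halving edges (of difference $(n-1)/2$) already form a thrackle of the right size by Lemma~\ref{lem:halving_intersect}.i; for $n \equiv 1 \pmod 4$ I would simply drop one of them. When $n$ is even only $n/2$ halving edges exist, so I would augment them with $n/2$ or $n/2 - 1$ almost-halving edges chosen by a consistent rotational rule, verifying pairwise intersection of the augmented thrackle using Lemma~\ref{lem:almost_intersect}.i together with a direct geometric check for pairs of almost-halving edges.

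The main obstacle is the partition of the remaining edges into halving pairs. The natural organization is via the circulant subgraphs $C_n(\{k\})$: two classes $C_n(\{k\})$ and $C_n(\{k'\})$ can be matched by pairing $e_{i, i+k}$ with $e_{i+k+1, i+k+1+k'}$ whenever the resulting quadrilateral $(i, i+k, i+k+1, i+k+k'+1)$ admits a halving diagonal, i.e., whenever at least one of $k+1$, $k'+1$, $k+k'+1$ equals the halving difference modulo $n$. When a class must be matched with itself, I would use a cyclic shift internal to the class and verify the halving witness again; parity issues from circulant classes of odd cardinality can be absorbed by shunting individual edges into the halving class $C_n(\{\lfloor n/2 \rfloor\})$, which also contains any halving edges not used as singletons.

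I would handle the odd residues first (where the thrackle is exactly the set of halving edges and the parities cooperate cleanly) and then adapt the construction to even $n$ by a case split on $n \bmod 4$. The case $n \equiv 2 \pmod 4$ is the most delicate: the singleton thrackle must mix halving with almost-halving edges, and the middle-difference circulants must be reassigned so that the halving-pair property survives while the count $a = n-1$ is met. Once the odd template is in place the even cases should follow by the same recipe with additional bookkeeping.
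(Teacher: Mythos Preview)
Your plan is correct and is essentially the paper's own construction: partition the edges into circulant graphs, pair $C_n(\{k\})$ with $C_n(\{k'\})$ where $k+k'+1=\lfloor n/2\rfloor$ so that each resulting class is a halving pair, use the halving edges (augmented by a half-set of almost-halving edges when $n$ is even) as the singleton thrackle, and finish with a case split on $n\bmod 4$ to absorb the one or two leftover edges. The paper carries out exactly this recipe, including the ``shunting'' you describe---the stray edge in the $n\equiv 1,2\pmod 4$ cases is added to a halving-edge class, which indeed yields a halving pair---so once you execute the bookkeeping your proof will coincide with theirs.
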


\begin{proof}

The theorem follows easily for $n \leq 3$; we prove the case $n=5$ in Figure~\ref{fig:k5}.
For $n > 5$, consider the following partition of the set of edges of $\mathsf{G}$:
\begin{equation}\label{partition}
E(\mathsf{G}) =
E(C_{n}(\{\lfloor\tfrac{n}{2}\rfloor\}))\,
\bigcup\,E(C_{n}(\{\lfloor\tfrac{n}{2}\rfloor-1\}))\,
\bigcup_{i\in I}\,E(C_{n}(\{i,\lfloor\tfrac{n}{2}\rfloor-1-i\}))
\end{equation}
where $I=\{1,\ldots,\lfloor\frac{\lfloor\frac{n}{2}\rfloor-1}{2}\rfloor\}$.

Observe that the first term is a circulant graph of halving edges and thus, by
Lemma~\ref{lem:halving_intersect}, its set of edges defines a
thrackle. This thrackle is maximal (containing $n$ edges) if $n$ is
odd but it is not maximal (containing only $\frac{n}{2}$ edges) if $n$
is even.

Note further, that for fixed $i$ the third term is either the union of two circulant graphs
of size $n$, or one circulant graph of size $n$ (only in the case when $i =
\lfloor\tfrac{n}{2}\rfloor-1-i$).

If $n$ is odd, then the edge set of $\mathsf{G}$ is partitioned into
$\frac{n-1}{2}$ circulant graphs, each of them of size
$n$. If $n$ is even, then the edge set of $\mathsf{G}$ is partitioned into $\frac{n}{2}-1$ circulant graphs, each of
them of size $n$, plus one circulant graph of size $\frac{n}{2}$. Using partition~\ref{partition}, we
give a coloring on the edges of $\mathsf{G}$, and prove that this coloring is
proper and complete.

We start by coloring all circulant graphs in the third term of the
partition, except for $i=\lfloor\frac{\lfloor\frac{n}{2}\rfloor-1}{2}\rfloor$.

In the following we set $i' = \lfloor \frac{n}{2}\rfloor -1-i$ and
therefore refer to $C_{n}(\{i,\lfloor\tfrac{n}{2}\rfloor-1-i\})$ as $C_n(
\{i,i'\})$. For every $i \in I \setminus \left
  \{\lfloor\frac{\lfloor\frac{n}{2}\rfloor-1}{2}\rfloor \right \}$ we
assign colors to $C_n(\{i,i'\})$ using the following function.

\[f_{i}\colon E(C_n(\{i,i'\}))  \longrightarrow  \{(i-1)n+1, \ldots,(i-1)n+n\} \text{ such that: }\]
\begin{eqnarray*}
  e_{j,j+i} & \mapsto & (i-1)n+j \text{, and} \\
  e_{j+i+1, j+i+1+i'} & \mapsto & (i-1)n+j.
\end{eqnarray*}
for $j \in \{1,\dots,n\}$. See
\figurename~\ref{fig:circulant-third}~(right) for an example with $i=2$.

The first rule colors the edges of $C_{n}(\{i\})$, while the second
rule colors the edges of $C_{n}(\{i'\})$. For fixed $i$ and $j$ both
rules assign the same color. Therefore, the chromatic classes are
pairs of edges, one edge ($e_{j,j+i}$) from $C_{n}(\{i\})$ and one
edge ($e_{j+i+1,j+i+1+i'}$) from $C_{n}(\{i'\})$. Observe, that all
these pairs are halving pairs $(e_{j,j+i},e_{j+i+1,j+i+1+i'})$ of
$\mathsf{G}$, because the edge
$e_{j,j+i+1+i'}=e_{j,j+\lfloor\frac{n}{2}\rfloor}$ is halving.

Hence, the partial coloring so far is complete (by
Lemma~\ref{lem:halving_intersect}) and proper (because the two edges
in each color class do not intersect).

The number of colors we have used so far is $N_{1}=n \left(
  \lfloor\tfrac{\left \lfloor \frac{n}{2} \right
    \rfloor-1}{2}\rfloor-1 \right)$.

So far, a subset of edges of the third term of the partition~\ref{partition} is colored. This leaves the following
parts uncolored:

$$E(C_{n}(\{\lfloor\tfrac{n}{2}\rfloor\}))\,
\bigcup\, E(C_{n}(\{\lfloor\tfrac{n}{2}\rfloor-1\}))\,
\bigcup\, E(C_{n}(\{i,i' \}))$$
where $i=\lfloor\frac{\lfloor\frac{n}{2}\rfloor-1}{2}\rfloor$ and $i'=\lfloor\tfrac{n}{2}\rfloor-1-i$.

These remaining circulant graphs differ for $n$ even or
$n$ odd. Further, the two cases $i = i'$ and $i\neq i'$ need to be
distinguished (for the remainder of the third term). This basically
results in the four cases $n \equiv x \mod 4$, for $x\in\{0,1,2,3\}$.

In a nutshell, to color the remaining edges, first the thrackle,
$E(C_{n}(\{\lfloor\tfrac{n}{2}\rfloor\}))$, will be colored (if $n$ is
even together with one half of
$E(C_{n}(\{\lfloor\tfrac{n}{2}\rfloor-1\}))$).  Then (the remaining
half of) the circulant graph $C_{n}(\{\lfloor\tfrac{n}{2}\rfloor-1\})$
together with $C_{n}(\{i,i' \})$
($i=\lfloor\frac{\lfloor\frac{n}{2}\rfloor-1}{2}\rfloor$ and
$i'=\lfloor\tfrac{n}{2}\rfloor-1-i$) is colored. In each step we will
prove, that the (partial) coloring is proper and complete.

\begin{enumerate}
\item Case $n > 5$ is odd. To color the maximal thrackle, $E(C_n(\{ \lfloor \frac{n}{2} \rfloor \}))$, we assign colors
to its edges using the function
\[f\colon E(C_n(\{\lfloor \tfrac{n}{2} \rfloor\}))\longrightarrow \{N_1+1,\dots,N_1+n\} \text{ such that: }\]
\[e_{j, j+\lfloor \frac{n}{2} \rfloor} \mapsto N_1+j,\] for each $j
\in \{1,\dots,n\}$. Observe that $E(C_n(\{ \lfloor \frac{n}{2} \rfloor
\}))$ is a set of $n$ halving edges. See
\figurename~\ref{fig:odd-thrackle}~(left) for an example of such a
thrackle.

\begin{figure}[htb]
  \begin{center}
    \includegraphics{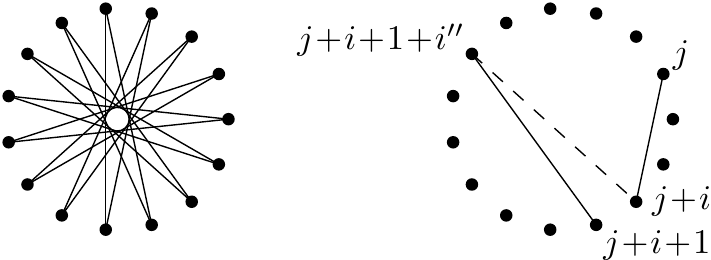}
    \caption{Examples for $n=15$. Left: A circulant graph
      $C_n\left(\left\{ \left\lfloor\frac{n}{2}\right\rfloor
        \right\}\right)$ of halving edges if $n$ is odd. Right:
      Halving pair (solid) with color $N_2+j$ from
      $E(C_n(\{i,i''\}))$, with $n \equiv 3 \mod 4$ and some fixed
      $j$. The witness of the halving pair is shown dashed.}
    \label{fig:odd-thrackle}
  \end{center}
\end{figure}

The coloring so far is proper, because each new chromatic class has
size one. Further, each chromatic class so far consists of either a
halving edge or a halving pair. Hence, by
Lemma~\ref{lem:halving_intersect}, the coloring is also complete.

It is easy to see that we are using $N_2= N_1 + n =
n\lfloor\tfrac{\lfloor \frac{n}{2} \rfloor-1}{2}\rfloor$ colors so far.

The remaining uncolored edges are:
$$E(C_{n}(\{\lfloor\tfrac{n}{2}\rfloor-1\}))\,
\bigcup\, E(C_{n}(\{i,i' \}))$$
where $i=\lfloor\frac{\lfloor\frac{n}{2}\rfloor-1}{2}\rfloor$ and $i'=\lfloor\tfrac{n}{2}\rfloor-1-i$.
These two circulant graphs will be colored together. Let $i''=
\lfloor\frac{n}{2}\rfloor-1$. As $n$ is odd, $C_n(\{i''\})$ consists
of $n$ edges. The size of $E(C_{n}(\{i,i' \}))$ depends on the two
cases $i=i'$ and $i\neq i'$.

\begin{enumerate}
\item $i=i'$: As $n$ is odd, $n \equiv 3 \mod 4$. The circulant graph
  $C_n(\{i,i'\}) = C_n(\{i\})$ is of size $n$. Thus, $2n$ edges remain uncolored.

  We assign $n$ colors to the $2n$ edges of $C_{n}(\{i,i''\})$ as
  follows:

\[f_{i}\colon E(C_n(\{i,i''\}))\longrightarrow \{N_{2}+1,\ldots,N_{2}+n\}\text{, such that}\]
\begin{eqnarray*}
e_{j,j+i} & \mapsto & N_{2}+j \text{,} \\
e_{j+i+1,j+i+1+i''} & \mapsto & N_{2}+j
\end{eqnarray*}
for $j \in \{1,\dots,n\}$.

Each new chromatic class consists of a pair
$(e_{j,j+i},e_{j+i+1,j+i+1+i''})$ of edges. See
\figurename~\ref{fig:odd-thrackle}~(right) for an example of such a
pair. Because the edge $e_{j+i,j+i+1+i''}=e_{j+i,j+i+\lfloor
  \frac{n}{2} \rfloor }$ is a halving edge, the pair
$(e_{j,j+i},e_{j+i+1,j+i+1+i''})$ is a halving pair.
Therefore, all edges are colored and each chromatic class consists of
either a halving edge or a halving pair. By
Lemma~\ref{lem:halving_intersect} the coloring is complete and proper
(as the edges of halving pairs are disjoint).

The total number of colors used is $N_{3}=N_2 + n = n (
\lfloor\tfrac{\lfloor \frac{n}{2} \rfloor-1}{2}\rfloor+1 )$, that is
$N_3=\left\lfloor \frac{n^2 + n}{4}\right\rfloor$ colors, as $n \equiv
3 \mod 4$ in this case.

\item $i \not =i'$: As $n$ is odd, $n \equiv 1 \mod 4$. The circulant
  graph $C_n(\{i,i'\})$ is of size $2n$. Thus, $3n$ edges
  remain uncolored.
  We assign $n$ colors to the $2n$ edges of $C_{n}(\{i,i'\})$ and
  $\left\lfloor\frac{n}{2}\right\rfloor$ colors to the $n$ edges of
  $C_{n}(\{i''\})$ as follows:

\[f_{i}\colon E(C_n(\{i,i',i''\}))\longrightarrow \{N_{2}+1,\ldots,N_{2}+n+\lfloor \tfrac{n}{2} \rfloor\}\text{, such that}\]
\begin{eqnarray*}
e_{j,j+i} & \mapsto & N_{2}+j \text{,}\\
e_{j+i+1,j+i+1+i'} & \mapsto & N_{2}+j,
\end{eqnarray*}
$\text{for } j \in \{1, \ldots,n \}$, and
\begin{eqnarray*}
e_{j,j+i''} & \mapsto & N_{2}+n+j \text{,} \\
e_{j+i''+1,j+i''+1+i''} & \mapsto & N_{2}+n+j,
\end{eqnarray*}
$\text{for } j \in \{ 1, \ldots, \lfloor \tfrac{n}{2} \rfloor \}$. See
\figurename~\ref{fig:odd-b}~(left and~middle) for examples.

\begin{figure}[htb]
  \begin{center}
    \includegraphics{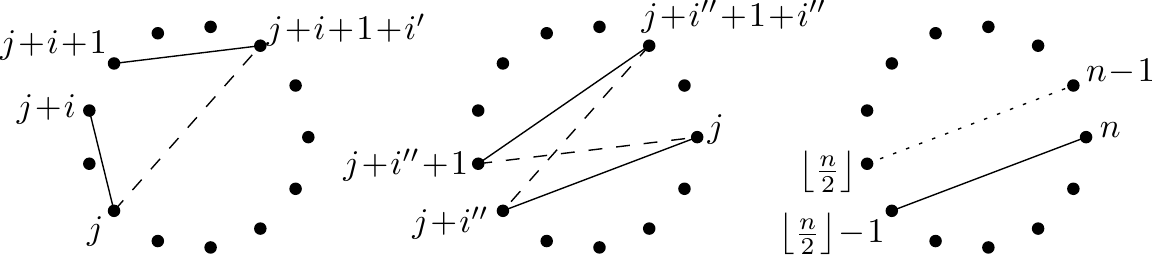}
    \caption{Examples with $n=13$, for $n$ is odd and $i \not =i'$: $n
      \equiv 1 \mod 4$. Left: Halving pair with color $N_2+j$ from
      $E(C_n(\{i,i'\}))$. Middle: Halving pair with color $N_2+n+j$
      from $E(C_n(\{i''\}))$. Both for fixed $j$. Halving pairs are
      shown solid, witnesses of the halving pairs are shown
      dashed. Right: The single remaining edge $e_{n,\lfloor
        \frac{n}{2} \rfloor-1}$ (solid) is combined with the halving
      edge
      $e_{\lfloor\frac{n}{2}\rfloor,n-1}$
      (dotted), colored with color $N_1+\lfloor\frac{n}{2}\rfloor$.}
    \label{fig:odd-b}
  \end{center}
\end{figure}

Each new chromatic class consists of a pair of edges. These pairs are
either $(e_{j,j+i},e_{j+i+1,j+i+1+i'})$ or
$(e_{j,j+i''},e_{j+i''+1,j+i''+1+i''})$ combined from the edges of
$C_{n}(\{i,i'\})$ or $C_{n}(\{i''\})$, respectively.
The pair $(e_{j,j+i},e_{j+i+1,j+i+1+i'})$ is a halving pair with the
halving edge $e_{j,j+i+1+i'}=e_{j,j+\lfloor\frac{n}{2}\rfloor}$ as
witness, and $(e_{j,j+i''},e_{j+i''+1,j+i''+1+i''})$ is a halving pair with the
halving edges $e_{j,j+i''+1}=e_{j,j+\lfloor\frac{n}{2}\rfloor}$ and
$e_{j+i'',j+i''+1+i''}=e_{j+i'',j+i''+\lfloor\frac{n}{2}\rfloor}$ as witnesses.
Each chromatic class so far consists of either a halving edge or a
halving pair. Hence, the coloring is complete (by
Lemma~\ref{lem:halving_intersect}) and proper (as the edges of halving
pairs are disjoint).

Note, that a single edge, $e_{n,\lfloor \tfrac{n}{2} \rfloor-1}$ of $C_{n}(\{i''\})$,
remains uncolored. We add this edge to the chromatic class (with color
$N_1+\lfloor\frac{n}{2}\rfloor$) containing the halving edge
$e_{\lfloor\frac{n}{2}\rfloor,n-1} =
e_{\lfloor\frac{n}{2}\rfloor,\lfloor\frac{n}{2}\rfloor+\lfloor\frac{n}{2}\rfloor}$. See
\figurename~\ref{fig:odd-b}~(right). Observe, that $e_{n,\lfloor
  \frac{n}{2} \rfloor-1}$ and $e_{\lfloor\frac{n}{2}\rfloor,n-1}$ are
disjoint, thus the coloring remains proper. Further, adding an edge to
an existing chromatic class of a complete coloring, maintains the
completeness of the coloring.

As all edges are colored, the total number of colors used is
$N_{3}=N_2+n+\lfloor \frac{n}{2} \rfloor = n( \lfloor\tfrac{\lfloor
  \frac{n}{2} \rfloor-1}{2}\rfloor+1 )+\lfloor \tfrac{n}{2} \rfloor$,
that is $N_3=\left\lfloor \frac{n^2 + n}{4}\right\rfloor$, as $n
\equiv 1 \mod 4$ in this case.
\end{enumerate}


\item Case $n > 5$ is even. Recall that only
  $N_1$ chromatic classes exist so far, each containing a halving pair
  of edges. The thrackle
  $E(C_{n}(\{\lfloor\frac{n}{2}\rfloor\}))=E(C_{n}(\{\frac{n}{2}\}))$
  is not maximal in this case. See
  \figurename~\ref{fig:even-thrackle}~(left). To get a maximal
  thrackle we add half the edges of $C_n(\{\frac{n}{2}-1\})$ to
  $C_{n}(\{\frac{n}{2}\})$.
  Note that $E(C_n(\{\frac{n}{2}-1\}))$ is the set of
  almost-halving edges in the case of $n$ is even.

  \begin{figure}[htb]
    \begin{center}
      \includegraphics{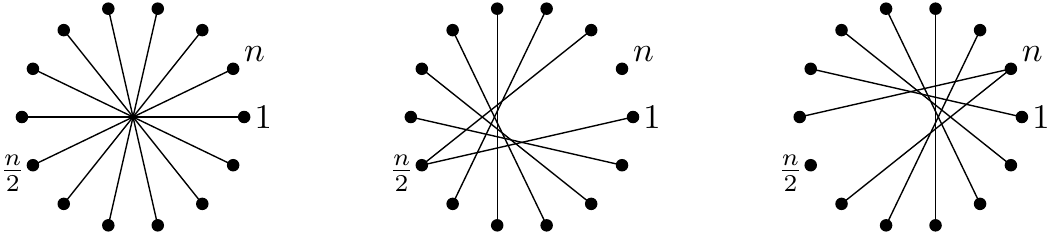}
      \caption{Examples with $n=14$, for the case when $n$ is
        even. Left: The thrackle,
        $E(C_{n}(\frac{n}{2}\}))$, of the $\frac{n}{2}$
        halving edges. Middle: The thrackle, $E(C'_n(\{ \frac{n}{2}
        -1\}))$, of the first $\frac{n}{2}$ almost-halving edges of
        $E(C_n(\{ \frac{n}{2} -1\}))$. Right: The thrackle, $E(C''_n(\{
        \frac{n}{2} -1\}))$, of the second $\frac{n}{2}$
        almost-halving edges of $E(C_n(\{ \frac{n}{2} -1\}))$.}
      \label{fig:even-thrackle}
    \end{center}
  \end{figure}

  Let the thrackle $E(C'_n(\{ \frac{n}{2}
  -1\}))=\{e_{1,\frac{n}{2}},\dots,e_{\frac{n}{2},n-1} \}$ and the thrackle
  $E(C''_n(\{ \frac{n}{2}
  -1\}))=\{e_{\frac{n}{2}+1,n},\dots,e_{n,\frac{n}{2}-1} \}$ define
  the two halves of $C_n(\{\frac{n}{2}-1\})$ with $\frac{n}{2}$
  almost-halving edges each. See
  \figurename~\ref{fig:even-thrackle}~(middle and right).
  It is easy to see that $E(C'_n(\{ \frac{n}{2} -1\}))$ is a thrackle
  (all its edges intersect each other). Further, by
  Lemma~\ref{lem:almost_intersect}, each almost-halving edge
  intersects each halving edge. Thus, $E(C_n(\{ \frac{n}{2} \}) \cup
  C'_n(\{ \frac{n}{2}-1\}))$ is a maximal thrackle of size $n$. The
  following function assigns one color to each edge of this maximal
  thrackle.

\[f\colon E(C_n(\{\frac{n}{2}\}) \cup C'_n(\{ \frac{n}{2}-1 \}))\longrightarrow \{N_1+1,\dots,N_1+n\}\text{, such that}\]
\begin{eqnarray*}
e_{j,j+\frac{n}{2}} & \mapsto & N_{1}+j \text{,} \\
e_{j,j+\frac{n}{2}-1} & \mapsto & N_{1}+\frac{n}{2}+j
\end{eqnarray*}
for each $j \in \{1,\dots,\tfrac{n}{2}\}$.

The coloring so far is proper, because each new chromatic class has
size one. Further, each chromatic class consists of either a
halving edge, a halving pair, or an almost-halving edge.
The almost-halving edges used so far form a thrackle and thus,
intersect each other. Hence, by Lemmas~\ref{lem:halving_intersect}
and~\ref{lem:almost_intersect}, the coloring is also complete. It is
easy to see that we are using $N_2 = N_1 + n =
n\lfloor\tfrac{n-2}{4}\rfloor $ colors so far.

The remaining uncolored edges are
$$E(C_{n}''(\{\lfloor\tfrac{n}{2}\rfloor-1\}))\,
\bigcup\, E(C_{n}(\{i,i' \}))$$
where $i=\lfloor\frac{n-2}{4}\rfloor$ and $i'=\frac{n}{2}-1-i$ (as $n$
is even).
These two circulant graphs will be colored together. For brevity, let
$i''= \frac{n}{2}-1$ and $C_n''(\{i''\})$ be the set of the remaining
$\frac{n}{2}$ almost-halving edges. The size of $E(C_{n}(\{i,i' \}))$
depends on the two cases $i=i'$ and $i\neq i'$.

\begin{enumerate}

\item $i = i'$: As $n$ is even, $n \equiv 2 \mod 4$. The circulant
  graph $C_n(\{i,i'\}) = C_n(\{i\})$ is of size $n$. Thus,
  $n+\frac{n}{2}$ edges remain uncolored. We assign $\frac{n}{2} + \lfloor\frac{n}{4}\rfloor$ colors to the $n+\frac{n}{2}$ edges of
  $C_{n}(\{i\}) \cup C''_{n}(\{i''\})$ as follows:
\[f_{i}\colon E(C_n(\{i\})\cup C''_n(\{ i'' \}))\longrightarrow \{N_{2}+1,\ldots,N_{2}+\tfrac{n}{2}+\lfloor\tfrac{n}{4}\rfloor\}\text{, such that}\]
\begin{eqnarray*}
e_{\tfrac{n}{2}+j,\tfrac{n}{2}+j+i''} & \mapsto & N_{2}+j \text{,} \\
e_{\tfrac{n}{2}+j+i''+1, \tfrac{n}{2}+j+i''+1+i} & \mapsto & N_{2}+j
\end{eqnarray*}
for $j \in \{1,\dots,\tfrac{n}{2}\}$,  and
\begin{eqnarray*}
e_{\tfrac{n}{2}+j,\tfrac{n}{2}+j+i} & \mapsto & N_{2}+\tfrac{n}{2}+j \text{,} \\
e_{\tfrac{n}{2}+j+i+1,\tfrac{n}{2}+j+i+1+i} & \mapsto & N_{2}+\tfrac{n}{2}+j
\end{eqnarray*}
for $j \in \{1,\dots,\lfloor\tfrac{n}{4}\rfloor\}$. See
\figurename~\ref{fig:even-a}~(left and~middle) for examples.

  \begin{figure}[tb]
    \begin{center}
      \includegraphics{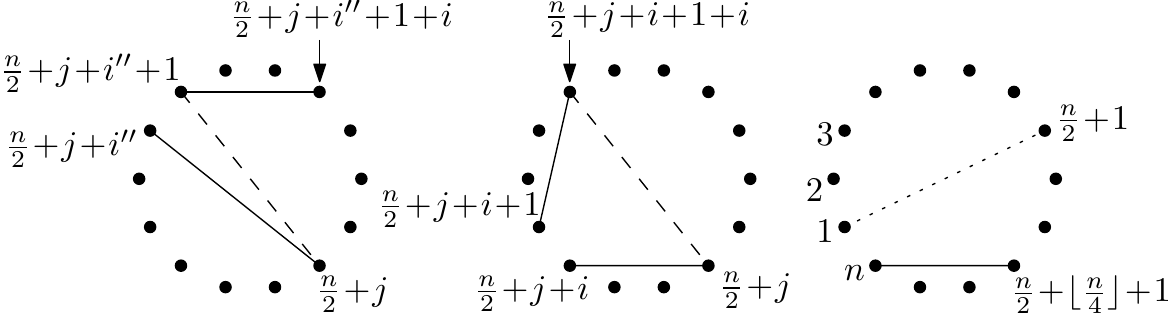}
      \caption{Examples with $n=14$, for the case when $n$ is even and
        $i = i'$: $n \equiv 2 \mod 4$. Left: Halving pair with color
        $N_2+j$. Middle: Halving pair with color
        $N_2+\frac{n}{2}+j$. Both for fixed $j$. Halving pairs are
        shown solid, witnesses of the halving pairs are shown
        dashed. Right: The single remaining edge
        $e_{\frac{n}{2}+\lfloor\frac{n}{4}\rfloor+1,n}$ (solid) is
        combined with the halving edge $(e_{1,\frac{n}{2}+1})$
        (dotted), colored with color~$N_1+1$.}
      \label{fig:even-a}
    \end{center}
  \end{figure}

  Each new chromatic class consists of a halving pair of edges from
  $E(C_n(\{i\})\cup C''_n(\{ i'' \}))$, either
  $(e_{\frac{n}{2}+j,\frac{n}{2}+j+i''},e_{\frac{n}{2}+j+i''+1,
    \frac{n}{2}+j+i''+1+i})$ with the halving edge
  $e_{\frac{n}{2}+j,\frac{n}{2}+j+i''+1}=e_{\frac{n}{2}+j,\frac{n}{2}+j+\frac{n}{2}}$
  as witness, or
  $(e_{\frac{n}{2}+j,\frac{n}{2}+j+i},e_{\frac{n}{2}+j+i+1,\frac{n}{2}+j+i+1+i})$
  with, again, the halving edge
  $e_{\frac{n}{2}+j,\frac{n}{2}+j+i+1+i}=e_{\frac{n}{2}+j,\frac{n}{2}+j+\frac{n}{2}}$
  as witness.
  Each chromatic class so far consists of either a halving edge, a
  halving pair, or one of the $\frac{n}{2}$ almost-halving edges that
  form a thrackle. Hence, the coloring is complete (by
  Lemmas~\ref{lem:halving_intersect} and~\ref{lem:almost_intersect})
  and proper (as the edges of halving pairs are disjoint).

  Note, that a single edge,
  $e_{\frac{n}{2}+\lfloor\frac{n}{4}\rfloor+1,n}$ of $C_{n}(\{i\})$,
  remains uncolored. We add this edge to the chromatic class (with
  color $N_1+1$) containing the halving edge
  $(e_{1,\frac{n}{2}+1})$. See
  \figurename~\ref{fig:even-a}~(right). Observe, that
  $e_{\frac{n}{2}+\lfloor\frac{n}{4}\rfloor+1,n}$ and
  $(e_{1,\frac{n}{2}+1})$ are disjoint. Thus, the coloring
  remains proper. Further, adding an edge to an existing chromatic
  class of a complete coloring, maintains the completeness of the
  coloring.

  As all edges are colored, the total number of colors used is
  $N_{3}=N_2 + \frac{n}{2}+\lfloor\frac{n}{4}\rfloor =
  n\lfloor\frac{n-2}{4}\rfloor +
  \frac{n}{2}+\lfloor\frac{n}{4}\rfloor$, that is $N_3=\left\lfloor
    \frac{n^2 + n}{4}\right\rfloor$, as $n \equiv 2 \mod 4$ in this
  case.

\item $i \neq i'$: As $n$ is even, $n \equiv 0 \mod 4$. The circulant
  graph $C_n(\{i,i'\})$ is of size $2n$. Thus,
  $2n+\frac{n}{2}$ edges remain uncolored. We assign $\frac{n}{2} + 3\frac{n}{4}$ colors to the $2n+\frac{n}{2}$ edges of
  $C_{n}(\{i,i'\}) \cup C''_{n}(\{i''\})$ as follows:
\[f_{i}\colon E(C_n(\{i,i'\})\cup C''_{n}(\{i''\}))\longrightarrow \{N_{2}+1,\ldots,N_{2}+\tfrac{n}{2}+3\tfrac{n}{4}\}\text{, such that}\]
\begin{eqnarray*}
e_{\tfrac{n}{2}+j,\tfrac{n}{2}+j+i''} & \mapsto & N_{2}+j \text{,} \\
e_{\tfrac{n}{2}+j+i''+1,\tfrac{n}{2}+j+i''+1+i} & \mapsto & N_{2}+j \text{,}
\end{eqnarray*}
\begin{eqnarray*}
e_{3\tfrac{n}{4}+j,3\tfrac{n}{4}+j+i''} & \mapsto & N_{2}+\tfrac{n}{4}+j \text{,} \\
e_{3\tfrac{n}{4}+j+i''+1,3\tfrac{n}{4}+j+i''+1+i'} & \mapsto & N_{2}+\tfrac{n}{4}+j \text{,}
\end{eqnarray*}
for each $j \in \{1,\dots,\tfrac{n}{4}\}$,  and
\begin{eqnarray*}
e_{\tfrac{n}{4}+j,\tfrac{n}{4}+j+i} & \mapsto & N_{2}+\tfrac{n}{2}+j \text{,} \\
e_{\tfrac{n}{4}+j+i+1,\tfrac{n}{4}+j+i+1+i'} & \mapsto & N_{2}+\tfrac{n}{2}+j
\end{eqnarray*}
for each $j \in \{1,\dots,3\tfrac{n}{4} \}$.
  See \figurename~\ref{fig:even-b} for an example of these three
  different types of pairs of edges.

  \begin{figure}[tb]
    \begin{center}
      \includegraphics{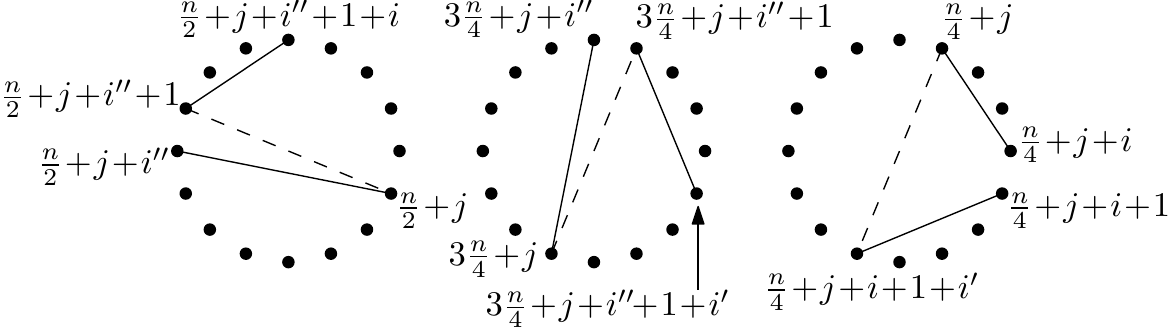}
      \caption{Examples with $n=16$, for the case when $n$ is even and
        $i \neq i'$: $n \equiv 0 \mod 4$. Left: Halving pair with color
        $N_2+j$. Middle: Halving pair with color
        $N_2+\frac{n}{4}+j$. Right: Halving pair with color
        $N_2+\frac{n}{2}+j$. All for fixed $j$. Halving pairs are
        shown solid, witnesses of the halving pairs are shown
        dashed.}
      \label{fig:even-b}
    \end{center}
  \end{figure}

  Each new chromatic class consists of a halving pair of edges from
  $C_n(\{i,i'\})\cup C''_n(\{ i'' \})$, either
  $(e_{\frac{n}{2}+j,\frac{n}{2}+j+i''},e_{\frac{n}{2}+j+i''+1,
    \frac{n}{2}+j+i''+1+i})$ with the halving edge
  $e_{\frac{n}{2}+j,\frac{n}{2}+j+i''+1}=e_{\frac{n}{2}+j,\frac{n}{2}+j+\frac{n}{2}}$
  as witness (\figurename~\ref{fig:even-b}~(left)),
  $(e_{3\frac{n}{4}+j,3\frac{n}{4}+j+i''},e_{3\frac{n}{4}+j+i''+1,3\frac{n}{4}+j+i''+1+i'})$
  with the halving edge
  $e_{3\frac{n}{4}+j,3\frac{n}{4}+j+i''+1}=e_{3\frac{n}{4}+j,3\frac{n}{4}+j+\frac{n}{2}}$
  as witness (\figurename~\ref{fig:even-b}~(middle)), or
  $(e_{\frac{n}{4}+j,\frac{n}{4}+j+i},e_{\frac{n}{4}+j+i+1,\frac{n}{4}+j+i+1+i'})$
  with, again, the halving edge
  $e_{\frac{n}{4}+j,\frac{n}{4}+j+i+1+i'}=e_{\frac{n}{4}+j,\frac{n}{4}+j+\frac{n}{2}}$
  as witness (\figurename~\ref{fig:even-b}~(right)).
  Each chromatic class so far consists of either a halving edge, a
  halving pair, or one of the $\frac{n}{2}$ almost-halving edges that
  form a thrackle. Hence, the coloring is complete (by
  Lemmas~\ref{lem:halving_intersect} and~\ref{lem:almost_intersect})
  and proper (as the edges of halving pairs are disjoint).

  As all edges are colored, the total number of colors used is
  $N_{3}=N_2 + \frac{n}{2}+ 3\frac{n}{4} =
  n\lfloor\frac{n-2}{4}\rfloor + \frac{n}{2}+3\frac{n}{4}$, that is
  $N_3=\left\lfloor \frac{n^2 + n}{4}\right\rfloor$, as $n \equiv 0
  \mod 4$ in this case.

\end{enumerate}
\end{enumerate}
\end{proof}

\begin{proof}[\textbf{Proof of Theorem~\ref{thm:big} i)}]

For $n \neq 4$, by Theorem~\ref{lowbound} we get that $ \left\lfloor
    \frac{n^2+n}{4} \right\rfloor\leq \alpha_1(\mathsf{G})$, and by
  Theorem~\ref{thm:upper} and Equation~\ref{eq2} we conclude that
  $\alpha_1(\mathsf{G}) = \psi_1(\mathsf{G}) = \left\lfloor \frac{n^2
      + n}{4}\right\rfloor$.

\end{proof}

Theorem~\ref{thm:big} i) excludes the case $n=4$, this is because $\mathsf{K}_4$ is the only complete convex geometric graph for which $\alpha_1$ and $\psi_1$ are different. We now prove that $\alpha_1(\mathsf{K}_4) = 4$ and $\psi_1(\mathsf{K}_4)=5$. By Theorem~\ref{thm:upper} we have that $\psi_1(\mathsf{K}_4) \leq 5$, and by Figure~\ref{fig:k4} (left) we can conclude that $\psi_1(\mathsf{K}_4)=5$. Now, by Figure~\ref{fig:k4} (right) we have $\alpha_1(\mathsf{K}_4) \geq 4$. Suppose that $\alpha_1(\mathsf{K}_4) = 5$, then in the coloring of $\mathsf{K}_4$ with $5$ colors, there must be four color classes of size one, and one color class of size two. The class of size two must be composed by two opposite edges of $\mathsf{K}_4$, this implies that the remaining two opposite edges belong to classes of size one; but this is a contradiction because the coloring must be complete. Therefore, it follows that $\alpha_1(\mathsf{K}_4) = 4$.

\begin{figure}[h!]
    \begin{center}
      \includegraphics[scale=1.0]{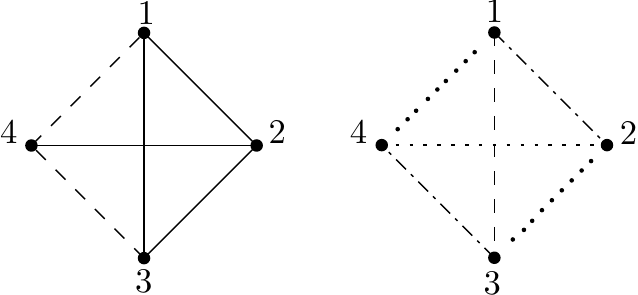}
      \caption{ Left: $\psi_1(\mathsf{K}_4) = 5$. One color class of size two $\{e_{3,4},e_{4,1}\}$; and four color classes of size one $\{e_{1,2}\}$,$\{e_{2,3}\}$,$\{e_{1,3}\}$,$\{e_{2,4}\}$.
      Right: $\alpha_1(\mathsf{K}_4)=4$. Two color classes of size two: $\{e_{1,2},e_{3,4}\}$, $\{e_{2,3}, e_{4,1}\}$;
      and two color classes of size one $\{e_{1,3}\}$, $\{e_{4,2}\}$. }
      \label{fig:k4}
    \end{center}
  \end{figure}

\section{On $\psi_g(K_n)$}\label{sec:general}

In this section we consider point sets in general position in the
plane, and present lower and upper bounds for the geometric
pseudoachromatic index. Recall that the geometric pseudoachromatic index of a graph $G$ is defined as:

$$\psi_g(G)  = \min\{\psi_1(\mathsf{G})\colon \mathsf{G} \textrm{ is a geometric graph of }G\}.$$

\subsection{Upper bound for $\psi_g(K_n)$}

Let $G=(V,E)$ be a graph, and let $\mathsf{G}$ be a geometric representation of $G$. Consider two intersecting edges in $\mathsf{G}$, the intersection might occur either at a common interior point (crossing), or at a common end point (at a vertex).

On one hand, if we consider all edges of $\mathsf{G}$ and denote by $m$ the total number of intersections that occur at vertices, $m$ is precisely the number of edges in the line graph of $G$. That is, if $deg(v)$ is the degree of $v \in V$, then $m=\sum_{v \in V}\binom{deg(v)}{2}.$

On the other hand, the \emph{rectilinear crossing number of $\mathsf{G}$}, denoted by $\overline{cr}(\mathsf{G})$, is defined as the number of edge crossings that occur in $\mathsf{G}$. Given a graph $G$, the \emph{rectilinear crossing number of $G$} is the minimum number of crossings over all possible geometric embeddings of $G$; notationally
$$\overline{cr}(G) = \min\{\overline{cr}(\mathsf{G})\colon \mathsf{G} \textrm{ is a geometric graph of }G\}.$$

It seems natural that there should be a relationship between the
rectilinear crossing number of a graph, and its geometric achromatic
and pseudoachromatic indices. In the following lines we establish bounds for $\psi_g(G)$ as a function of $m$ and $\overline{cr}(G)$.

\begin{lemma} \label{lemma:crossing}
Let $\mathsf{G}$ be a geometric graph of order $n$, denote by $\overline{cr}(\mathsf{G})$ the
number of edge crossings in $\mathsf{G}$, and by $m$ the total number of edge intersections occurring at vertices of $\mathsf{G}$. Then:
$$\psi_1(\mathsf{G}) \leq \left \lfloor \tfrac{1+\sqrt{1+8(m+\overline{cr}(\mathsf{G}))}}{2}\right \rfloor .$$

\begin{proof}
The total number of edge intersections is $m + \overline{cr}(\mathsf{G})$. Then, $m + \overline{cr}(\mathsf{G}) \geq \tbinom{\psi_1(\mathsf{G})}{2}$ so
  that $\psi_1(\mathsf{G})(\psi_1(\mathsf{G})-1) \leq 2(m +
  \overline{cr(\mathsf{G})})$. Solving this inequality we get
  $\psi_1(\mathsf{G}) \leq \left \lfloor
    \tfrac{1+\sqrt{1+8(m+\overline{cr}(\mathsf{G}))}}{2}\right
  \rfloor$.
\end{proof}
\end{lemma}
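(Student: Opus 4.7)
The plan is to bound $\psi_1(\mathsf{G})$ by counting edge intersections in two ways and setting up a simple inequality. Put $k := \psi_1(\mathsf{G})$ and fix a complete edge coloring of $\mathsf{G}$ with $k$ colors. Since the coloring is complete, for every one of the $\binom{k}{2}$ pairs of distinct colors there exists at least one pair of intersecting edges realizing that pair. A single pair of intersecting edges carries exactly two colors (possibly equal), so it can witness at most one pair of distinct colors. This gives the lower bound $\binom{k}{2}$ on the total number of pairs of intersecting edges.

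Next I would argue that the total number of intersecting pairs of edges in $\mathsf{G}$ equals $m + \overline{cr}(\mathsf{G})$. By the definitions in the paper, two edges intersect exactly when they share an endpoint or they cross; these two possibilities are mutually exclusive (edges share at most one point, and if that point is a common endpoint it is not an interior crossing), so the count splits cleanly into the $m$ vertex-intersections and the $\overline{cr}(\mathsf{G})$ crossings.

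Combining the two observations yields
\begin{equation*}
\binom{k}{2} \;\leq\; m + \overline{cr}(\mathsf{G}),
\end{equation*}
which rearranges to $k^{2} - k - 2(m + \overline{cr}(\mathsf{G})) \leq 0$. Solving this quadratic inequality for the positive root and using that $k$ is a nonnegative integer gives
\begin{equation*}
k \;\leq\; \left\lfloor \tfrac{1 + \sqrt{1 + 8(m + \overline{cr}(\mathsf{G}))}}{2} \right\rfloor,
\end{equation*}
as required.

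There is really no serious obstacle here; the argument is essentially a double-counting identity together with a pigeonhole-style observation. The only subtle point worth stating explicitly is that a pair of intersecting edges witnesses \emph{at most} one color pair (and contributes $0$ if both edges happen to share a color), which is what makes $\binom{k}{2} \leq m + \overline{cr}(\mathsf{G})$ rather than an equality.
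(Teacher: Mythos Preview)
Your argument is correct and follows essentially the same approach as the paper: count intersecting edge pairs as $m+\overline{cr}(\mathsf{G})$, observe that a complete $k$-coloring forces at least $\binom{k}{2}$ such pairs, and solve the resulting quadratic. You have simply spelled out in more detail why each intersecting pair witnesses at most one color pair and why the vertex-intersection and crossing counts are disjoint.
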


Using the above Lemma, we can establish the following result.

\begin{theorem} \label{teo:generalbound}
    Let $G = (V,E)$ be a graph of order $n$, with $m=\sum_{v \in V}\binom{deg(v)}{2}$. Denote by $\overline{cr}(G) $ its rectilinear
    crossing number. Then $$\psi_g(G) \leq \left \lfloor \frac{1+\sqrt{1+8(m+\overline{cr}(G)}}{2}\right \rfloor .$$

\begin{proof}
    Let $\mathsf{G}_0$ be a geometric representation of $G$ such that $\overline{cr}(\mathsf{G}_0)=\overline{cr}(G)$; that is, $\mathsf{G_0}$ is a geometric graph of $G$ with minimum number of crossings. As a consequence of Lemma~\ref{lemma:crossing} we have the following:
    \begin{eqnarray*}
        \psi_g(G) & = & \min\{\psi_1(\mathsf{G})\colon \mathsf{G} \textrm{ is a geometric graph of }G\}\leq \psi_1(\mathsf{G_0}) \\
        & \leq &  \left\lfloor \frac{1+\sqrt{1+8(m+\overline{cr}(\mathsf{G}_0))}}{2}\right\rfloor = \left\lfloor \frac{1+\sqrt{1+8(m+\overline{cr}(G))}}{2}\right\rfloor
    \end{eqnarray*}
\end{proof}
\end{theorem}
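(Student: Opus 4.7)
The approach is essentially a one-step application of Lemma~\ref{lemma:crossing} to a carefully chosen geometric representation. The key observation is that the quantity $m = \sum_{v \in V} \binom{\deg(v)}{2}$ is a combinatorial invariant of $G$: it counts edge pairs sharing an endpoint and is therefore independent of the drawing. Thus in any geometric graph $\mathsf{G}$ of $G$, the number of vertex-intersections equals $m$, while only the crossing count varies with the embedding.

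The plan is as follows. First, pick a geometric graph $\mathsf{G}_0$ of $G$ that realises the rectilinear crossing number, i.e.\ $\overline{cr}(\mathsf{G}_0) = \overline{cr}(G)$; such an $\mathsf{G}_0$ exists by definition of $\overline{cr}(G)$ as a minimum. Second, apply Lemma~\ref{lemma:crossing} to $\mathsf{G}_0$: the total number of intersections of pairs of edges in $\mathsf{G}_0$ is $m + \overline{cr}(\mathsf{G}_0) = m + \overline{cr}(G)$, so
\[
\psi_1(\mathsf{G}_0) \;\leq\; \left\lfloor \frac{1+\sqrt{1+8(m+\overline{cr}(G))}}{2}\right\rfloor.
\]
Third, invoke the definition of $\psi_g(G)$ as a minimum over all geometric representations of $G$: since $\mathsf{G}_0$ is one such representation,
\[
\psi_g(G) \;=\; \min\{\psi_1(\mathsf{G}) : \mathsf{G} \text{ is a geometric graph of } G\} \;\leq\; \psi_1(\mathsf{G}_0),
\]
and chaining this with the previous inequality yields the claimed bound.

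There is essentially no obstacle here; the only subtlety worth flagging explicitly in the write-up is that $m$ is the same for $\mathsf{G}_0$ as for $G$ (it depends only on the degree sequence), so replacing $\mathsf{G}_0$ by a crossing-minimising drawing does not change the $m$ term in the bound. The proof therefore reduces to combining the definition of $\psi_g$ with Lemma~\ref{lemma:crossing} on a crossing-optimal embedding, and no further case analysis or construction is required.
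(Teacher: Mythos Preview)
Your proposal is correct and follows essentially the same approach as the paper: choose a crossing-minimising geometric representation $\mathsf{G}_0$, apply Lemma~\ref{lemma:crossing} to it, and then use the definition of $\psi_g(G)$ as a minimum over all geometric representations. Your explicit remark that $m$ depends only on the degree sequence (and hence is the same for every drawing) is a helpful clarification that the paper leaves implicit.
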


Establishing bounds for $\overline{cr}(K_n)$ is a well studied problem in the literature. If we use these results, we can give a better bound for $\psi_g(K_n)$; note that in this case $m = n\binom{n-1}{2}$. The following result was shown in \cite{MR2652001}.

\begin{theorem} \label{teo:crossing}
    $\overline{cr}(K_{n})\leq c\binom{n}{4}+\Theta(n^{3})$ for $c=0.380488$.
\end{theorem}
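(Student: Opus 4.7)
The plan is to prove this upper bound by an explicit recursive construction that asymptotically achieves crossing density $c$, absorbing all lower-order contributions into the $\Theta(n^3)$ error term. The strategy is classical: find a small ``seed'' drawing of $K_{n_0}$ of good quality and then blow it up.

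First, I would fix a small value $n_0$ and construct an explicit rectilinear drawing $D_{n_0}$ of $K_{n_0}$ whose crossings satisfy $\overline{cr}(D_{n_0})/\binom{n_0}{4}\leq c=0.380488$. Natural candidates are highly symmetric point sets, in particular $3$-decomposable (three-fold rotationally symmetric) configurations, whose crossings can be tabulated by analyzing just one of the three orbits of crossing quadruples under the rotation. The selection of $n_0$ and its precise order type is the core of the argument: the target value $0.380488$ is obtained by optimizing over a family of such symmetric seeds, which is essentially a finite combinatorial search.

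Given $D_{n_0}$, the second step is a balanced blow-up: replace each of the $n_0$ points by a cluster of $\lceil n/n_0\rceil$ points placed inside a very small disk around its position, where each cluster is itself a recursively built good drawing of $K_{n/n_0}$. If the disks are small enough, the crossings of the resulting geometric graph split cleanly into three types: (a) crossings internal to a single cluster, totalling $n_0\cdot\overline{cr}(K_{n/n_0})$; (b) crossings whose four endpoints lie in four distinct clusters, which are in bijection with the crossings of $D_{n_0}$ and contribute exactly $\overline{cr}(D_{n_0})\cdot(n/n_0)^4$; and (c) residual crossings involving edges whose endpoints occupy only two or three clusters, which total $O(n^3)$ since they are indexed by at most three cluster labels.

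Combining these counts yields the recurrence
\[
\overline{cr}(K_n)\leq n_0\,\overline{cr}(K_{n/n_0})+\overline{cr}(D_{n_0})\cdot(n/n_0)^4+O(n^3).
\]
Dividing by $\binom{n}{4}$ and iterating shows that $\overline{cr}(K_n)/\binom{n}{4}$ converges to a limit at most $\overline{cr}(D_{n_0})/\binom{n_0}{4}\leq c$, up to a correction of order $1/n_0^3$, so that $\overline{cr}(K_n)\leq c\binom{n}{4}+\Theta(n^3)$. The main obstacle is the first step: exhibiting a small symmetric drawing whose crossing density is at most $0.380488$ demands an explicit geometric construction together with a careful enumeration of crossing quadruples within the symmetry orbits. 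The blow-up and the error tracking in the remaining two steps are then essentially bookkeeping, with the $\Theta(n^3)$ term absorbing both the mixed-cluster crossings in type (c) and the geometric tail of the iterated recurrence.
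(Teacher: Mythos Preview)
The paper does not prove this theorem at all: it is quoted verbatim as a known result from \cite{MR2652001} and used as a black box in the subsequent bound on $\psi_g(K_n)$. There is therefore no proof in the paper to compare your proposal against.

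For what it is worth, your outline is the standard blow-up scheme and would indeed yield a bound of the form $c\binom{n}{4}+\Theta(n^3)$ once a seed drawing of density at most $c$ is in hand; the recursion and the $O(n^3)$ bookkeeping are routine. The only substantive step is the one you flag yourself: exhibiting a concrete point configuration achieving density $\leq 0.380488$. This is not really a ``finite combinatorial search'' in any practical sense; the constant in the cited reference comes from carefully engineered large configurations, and producing such a seed is the entire content of that work. So your scaffold is correct, but the step you defer is precisely the theorem.
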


Using this theorem we obtain:

\begin{theorem}
    Let $K_n$ be the complete graph of order $n$. The geometric pseudoachromatic index of $K_n$ has the following upper bound:
    $$\psi_{g}(K_{n})\leq0.1781n^{2}+\Theta(n^{\frac{3}{2}}).$$

\begin{proof}

    Since $m=\underset{v\in V(K_{n})}{\sum}\binom{deg(v)}{2}=n\binom{n-1}{2}$, by Theorem \ref{teo:generalbound} and \ref{teo:crossing},

    \begin{eqnarray*}
        \psi_{g}(K_{n}) & \leq & \frac{1}{2}\sqrt{8\overline{cr}(K_{n}))+\Theta(n^{3})}+\Theta(1) = \frac{1}{2}\sqrt{8\frac{c}{4!}n^{4}+\Theta(n^{3})}+\Theta(1) \\
        & =& \sqrt{\frac{c}{12}}n^{2}+\Theta(n^{\frac{3}{2}}) \leq 0.1781n^{2}+\Theta(n^{\frac{3}{2}}).
    \end{eqnarray*}
\end{proof}
\end{theorem}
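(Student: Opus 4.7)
The plan is to simply plug the complete graph $K_n$ into the general upper bound provided by Theorem~\ref{teo:generalbound}, using the best known upper estimate for $\overline{cr}(K_n)$ from Theorem~\ref{teo:crossing}, and then extract the leading asymptotic term. Since Theorem~\ref{teo:generalbound} gives $\psi_g(G) \leq \lfloor \tfrac{1+\sqrt{1+8(m+\overline{cr}(G))}}{2}\rfloor$ for every graph $G$, the entire task reduces to estimating the quantity under the square root.

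First I would compute $m$ exactly for $K_n$. Because every vertex has degree $n-1$, we obtain $m = \sum_{v \in V(K_n)} \binom{n-1}{2} = n\binom{n-1}{2}$, which is $\Theta(n^3)$. Next I would substitute the bound of Theorem~\ref{teo:crossing}, namely $\overline{cr}(K_n) \leq c\binom{n}{4} + \Theta(n^3) = \frac{c}{24}n^4 + \Theta(n^3)$ with $c = 0.380488$. Adding these, the $m$ contribution is absorbed into the $\Theta(n^3)$ error, so $m+\overline{cr}(K_n) = \frac{c}{24}n^4+\Theta(n^3)$.

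It then remains to evaluate $\tfrac{1}{2}\sqrt{1+8(m+\overline{cr}(K_n))}$. Writing the radicand as $\frac{c}{3}n^4+\Theta(n^3)$ and pulling an $n^2$ out of the square root, I would use the elementary expansion $\sqrt{A+B} = \sqrt{A}+O(B/\sqrt{A})$ to deduce $\tfrac{1}{2}\sqrt{\tfrac{c}{3}n^4+\Theta(n^3)} = \sqrt{\tfrac{c}{12}}\,n^2 + \Theta(n)$. Since $\sqrt{c/12}=\sqrt{0.380488/12}<0.1781$, and the linear error term is certainly absorbed by $\Theta(n^{3/2})$, the stated bound $\psi_g(K_n) \leq 0.1781\,n^2+\Theta(n^{3/2})$ follows directly.

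There is essentially no obstacle: the statement is an immediate numerical consequence of the two theorems cited just above it, and the only work is the routine asymptotic manipulation of the square root. The only mild care needed is ensuring that the cubic-order term inside the radicand indeed contributes only a sub-quadratic error after taking the square root, which the expansion above makes transparent.
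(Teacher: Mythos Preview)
Your proposal is correct and follows essentially the same route as the paper: compute $m=n\binom{n-1}{2}=\Theta(n^3)$, insert the crossing-number bound $\overline{cr}(K_n)\le \frac{c}{24}n^4+\Theta(n^3)$ into Theorem~\ref{teo:generalbound}, and extract the leading term $\sqrt{c/12}\,n^2$ from the square root. The only cosmetic difference is that you (correctly) observe the error after the square-root expansion is actually $\Theta(n)$, which is then absorbed into the stated $\Theta(n^{3/2})$.
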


\subsection{Lower bound for $\psi_g(K_n)$}

In this section we present a lower bound for $\psi_g(K_n)$. First let us state a result which will be used later; this result was shown in~\cite{MR0188890}.

\begin{theorem}\label{thm:sixpartition}
Let $S$ be a set of $n$ points in general position in the plane.
There are three concurrent lines that divide the plane into six
parts each containing at least $\tfrac{n}{6} - 1$ points of $S$ in its interior.
\end{theorem}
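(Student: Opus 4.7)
The plan is to prove Theorem~\ref{thm:sixpartition} via an intermediate-value / continuity argument that constructs the three concurrent lines by successive bisections, building up the balance condition one coordinate at a time.

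\textbf{Stage 1 (Halving line).} First I would choose a line $\ell_1$ that bisects $S$, so that each closed half-plane determined by $\ell_1$ contains at least $\lceil n/2 \rceil$ points. Such an $\ell_1$ exists by the standard sweep argument: for any fixed direction, translating a line perpendicularly changes the count on each side by $\pm 1$ at a time, so a balancing position exists. Let $L$ and $R$ denote the points of $S$ on the two closed sides of $\ell_1$, with $|L|, |R| \geq \lfloor n/2 \rfloor$.

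\textbf{Stage 2 (Trisecting one half through a moving point).} For each $p \in \ell_1$, view the points of $L$ in angular order around $p$, and pick two lines $\ell_2(p), \ell_3(p)$ through $p$ that trisect this angular ordering. This produces, in the $L$-side, three wedges each containing at least $\lfloor |L|/3 \rfloor \geq \lfloor n/6 \rfloor$ points. As $p$ slides along $\ell_1$, the trisecting directions change continuously except at a finite set of combinatorial events (where a point of $L$ or $R$ becomes collinear with $p$ and another point), and at each such event the wedge counts shift by at most one.

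\textbf{Stage 3 (Balancing the other half).} The same two lines $\ell_2(p), \ell_3(p)$ also partition $R$ into three wedges of sizes $r_1(p), r_2(p), r_3(p)$ summing to $|R|$. I would then apply a discrete intermediate-value / parity argument along the sweep of $p$: tracking the vector $(r_1, r_2, r_3)$ as $p$ runs along $\ell_1$, symmetry (reversing $p$ sends the triple to a cyclic rotation) forces every linear combination $r_i - |R|/3$ to change sign during the sweep, yielding a position of $p$ at which each $r_i$ lies within $1$ of $|R|/3 \geq n/6$. Combining this with Stage~2 gives six sectors, each containing at least $\lfloor n/6 \rfloor - 1 \geq \frac{n}{6}-1$ points, as claimed.

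\textbf{Main obstacle.} The technical crux is Stage~3: we have essentially a single continuous degree of freedom (the position of $p$ on $\ell_1$) but two independent constraints to satisfy on the $R$-side, and the trisecting angles on the $L$-side are themselves functions of $p$. A naive one-variable IVT does not suffice; one needs a combinatorial Borsuk--Ulam or topological degree argument that exploits the antipodal symmetry of the sweep and the fact that sector counts change by unit amounts. This is precisely the argument carried out in \cite{MR0188890}, and the slack of $-1$ in the stated bound is what allows the discrete version of the continuous equipartition to go through.
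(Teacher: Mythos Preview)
The paper does not prove Theorem~\ref{thm:sixpartition} at all: it is quoted as a known result and attributed to \cite{MR0188890} (Ceder's six-partite point theorem). So there is no ``paper's own proof'' to compare against; your sketch already goes further than the paper does.

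That said, as a standalone argument your proposal has a genuine gap precisely where you flag it. In Stage~3 you have one real degree of freedom (the position of $p$ on a \emph{fixed} halving line $\ell_1$) but two independent balance conditions on the $R$-side, and you resolve this by asserting that ``reversing $p$ sends the triple $(r_1,r_2,r_3)$ to a cyclic rotation,'' which is not true in general: sliding $p$ to the other end of $\ell_1$ does not induce any cyclic symmetry on the $R$-wedges, because the trisecting directions $\ell_2(p),\ell_3(p)$ are dictated by the $L$-side and bear no simple relation to their values at the other endpoint. A one-parameter intermediate-value argument on a fixed $\ell_1$ cannot be expected to hit a codimension-two target. Ceder's actual proof (and the Buck--Buck precursor for measures) varies the \emph{direction} of the first line as well, obtaining a genuine circle's worth of parameters and then applying a continuity/degree argument over $S^1$; fixing $\ell_1$ at the outset throws away the degree of freedom that makes the topological argument work. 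Your final paragraph effectively concedes this by deferring to \cite{MR0188890}, but that is circular if the goal is to supply a proof. If you want to complete the sketch, let the direction of $\ell_1$ vary and track the resulting map $S^1\to\mathbb{R}^2$ recording the $R$-side imbalances; antipodal directions then genuinely swap roles, and a Borsuk--Ulam style argument applies.
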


In order to exhibit the desired coloring, first we divide the
plane into seven regions and then use this partition of the plane to construct a partition of
the edges of the complete geometric graph of order $n > 18$. We utilize a specific configuration
$\mathcal{L}$ of lines, defined as follows; see also
Figure~\ref{fig:linestwo} for a drawing of the configuration. Let $S$
be a set of $n=13m+6+r$ points in general position in the plane ($r <
13$). Choose horizontal lines $\ell_1$, $\ell_2$, and $\ell_3$ (listed
top-down) so that when writing $A', B'$ for the set of points between
$\ell_1$ \& $\ell_2$, and $\ell_2$ \& $\ell_3$, respectively, we have $|A'| =
12m+6$ and $|B'|=m+r$. Let $\ell_4$, $\ell_5$, $\ell_6$ be concurrent lines
that divide the set $A'$ into $6$ parts, each containing at least $2m$ points
in its interior; the existence of these lines is guaranteed by Theorem~\ref{thm:sixpartition}. Let $p$ be the point of intersection of the three lines. For each one of the six subsets of points induced by the partition, we take a subset of size exactly $2m$. Let $A,B,C,D,E,F$ be such subsets, listed in clockwise order. Take $G \subseteq B'$ such that $|G| = m$.

\begin{figure}[!htbp]
  \begin{center}
    \includegraphics{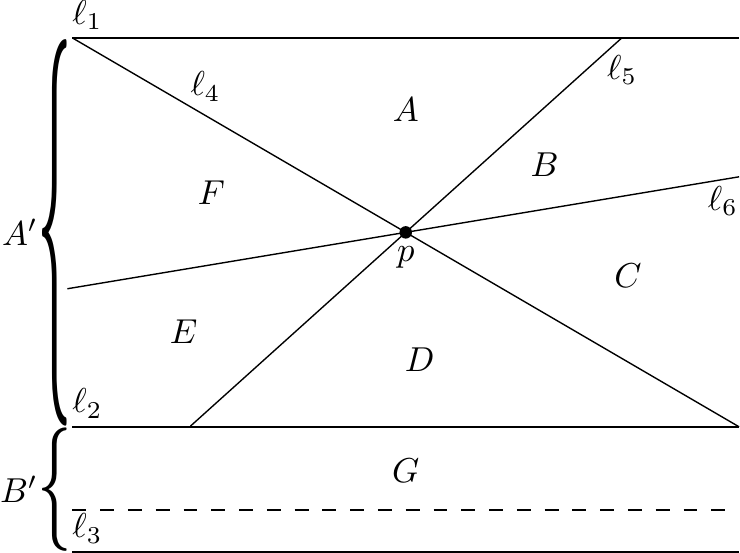}
    \caption{The line configuration $\mathcal{L}$.}
    \label{fig:linestwo}
  \end{center}
\end{figure}

Using these sets of points, first we construct three sets of subgraphs of $\mathsf{K}_n$. Then, we assign one color to each of them and show that such a coloring is complete. Let $A=\{a_1, \ldots, a_{2m}\}, B=\{b_1, \ldots, b_{2m}\}, C=\{c_1, \ldots, c_{2m}\}, D=\{d_1, \ldots, d_{2m}\}, E=\{e_1, \ldots,e_{2m}\}, F=\{f_1, \ldots, f_{2m}\}$; and $G = \{g_1, \ldots, g_m\}$.

For $i,j \in \{1, \ldots, 2m\}$, consider the following sets of subgraphs of $\mathsf{K_n}$:

\begin{itemize}

\item The subgraphs $\mathsf{X}_{i,j}$ with vertex set $\{a_i, b_j, d_i, e_j, g_{\left\lceil \frac{j}{2}\right\rceil }\}$ and edges

\begin{equation*}
    \{a_ib_j, b_jd_i, d_ie_j, e_ja_i \} \cup
    \begin{cases}
        \{a_ig_{ \frac{j}{2}}\} & \text{ if } j \text{ is even } \\
        \{d_ig_{\frac{j+1}{2}}\} &\text{ if } j \text{ is odd }.
    \end{cases}
\end{equation*}

Note that each $\mathsf{X}_{i,j}$ is a quadrilateral plus one edge. We call each quadrilateral $\mathsf{X}'_{i,j} \leq \mathsf{X}_{i,j}$, induced by vertices $a_i,b_j,d_i,e_j$.

\item The subgraphs $\mathsf{Y}_{i,j}$ with vertex set $\{b_i, c_j, e_i, f_j, g_{\left\lceil \frac{j}{2}\right\rceil }\}$ and edges

\begin{equation*}
    \{b_ic_j, c_je_i, e_if_j, f_jb_i \} \cup
    \begin{cases}
        \{b_ig_{ \frac{j}{2} }\} & \text{ if } j \text{ is even } \\
        \{e_ig_{\frac{j+1}{2}}\} &\text{ if } j \text{ is odd }.
    \end{cases}
\end{equation*}

Let $\mathsf{Y}'_{i,j} \leq \mathsf{Y}_{i,j}$ be the quadrilateral induced by vertices $b_i,c_j,e_i,f_j$.

\item The subgraphs $\mathsf{Z}_{i,j}$ with vertex set $\{c_i, d_j, f_i, a_j, g_{\left\lceil \frac{j}{2}\right\rceil }\}$ and edges
\begin{equation*}
    \{c_id_j, d_jf_i, f_ia_j, a_jc_i \} \cup
    \begin{cases}
        \{ c_ig_{ \frac{j}{2} } \} & \text{ if } j \text{ is even } \\
        \{ f_ig_{\frac{j+1}{2}} \} &\text{ if } j \text{ is odd }.
    \end{cases}
\end{equation*}

Let $\mathsf{Z}'_{i,j} \leq \mathsf{Z}_{i,j}$ be the quadrilateral induced by vertices $c_i,d_j,f_i,a_j$.

\end{itemize}

Please note that the set $\{\mathsf{X}_{i,j}, \mathsf{Y}_{i,j}, \mathsf{Z}_{i,j}\}$, with $i,j \in \{1, \ldots, 2m\}$ contains exactly $12m^2$ subgraphs. Also note that each subgraph $\mathsf{X'}_{i,j},\mathsf{Y'}_{i,j}$ and $\mathsf{Z'}_{i,j}$, is a (not necessarily convex) quadrilateral. The following lemma shows that $p$ is inside each of these quadrilaterals.

\begin{lemma}\label{lem:p_inside}

Let $p$ be the point of intersection of the three lines in Theorem~\ref{thm:sixpartition}. Then $p$ is inside each of the polygons induced by the graphs $\mathsf{X'}_{i,j},\mathsf{Y'}_{i,j}$ and $\mathsf{Z'}_{i,j}$, defined above.

\begin{proof}

Consider the polygon $P$ induced by $\mathsf{X}'_{i,j}$ and recall that the vertices of $P$ are $\{a_i,b_j,d_i,e_j\}$.
The line $\ell_4$ separates the subsets $A$ and $B$ from the subsets $D$ and $E$. Thus, $\ell_4$ separates the edge $a_ib_j$ from the edge $d_ie_j$.
The line $\ell_5$ separates the subsets $A$ and $E$ from the subsets $B$ and $D$. Thus, $\ell_5$ intersects the edges $a_ib_j$ and $d_ie_j$, of $P$.
Consider the segment of $\ell_5$ defined by its intersection point with $a_ib_j$, and by its intersection point with $d_ie_j$; call this segment $s$. As $\ell_4$ lies between $a_ib_j$ and $d_ie_j$, the point of intersection of $\ell_5$ with $\ell_4$ (which is the point $p$), lies in the interior of  $s$. Furthermore, as $\ell_5$ intersects $P$ exactly twice, $s$ is in the interior of $P$ and thus, $p$ is inside $P$.

Analogously, $p$ is inside the polygons induced by $\mathsf{Y}'_{i,j}$ and $\mathsf{Z}'_{i,j}$.

\end{proof}
\end{lemma}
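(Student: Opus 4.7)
The plan is to show, for each of the three quadrilaterals, that $p$ can be sandwiched inside by using two of the three concurrent lines $\ell_4,\ell_5,\ell_6$. The starting observation is that these three lines partition the annular region around $p$ into the six sectors $A,B,C,D,E,F$ arranged clockwise, and each of the three lines splits this cyclic list into two contiguous runs of three sectors. In particular $A$ and $D$ are opposite sectors, as are $B\&E$ and $C\&F$; any pair of opposite sectors lies on opposite sides of exactly two of the three lines and is separated only by the third.

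Focusing first on $\mathsf{X}'_{i,j}$, whose vertices lie in $A,B,D,E$: I would single out the line (call it $\ell_4$) that separates $\{A,B\}$ from $\{D,E\}$, and the line (call it $\ell_5$) that separates $\{A,E\}$ from $\{B,D\}$. Then the edges $a_ib_j$ and $d_ie_j$ of the quadrilateral $P$ sit on opposite sides of $\ell_4$, while $\ell_5$ crosses each of them (since the endpoints of each edge are on opposite sides of $\ell_5$). Let $s$ be the subsegment of $\ell_5$ bounded by these two crossing points. Since $P$ is a simple quadrilateral and $\ell_5$ meets $\partial P$ in exactly two points, $s$ lies in the interior of $P$. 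Finally, $p=\ell_4\cap\ell_5$ lies on $\ell_5$, and because $\ell_4$ strictly separates $a_ib_j$ from $d_ie_j$, the point $p$ lies between the two crossings along $\ell_5$, i.e.\ in the interior of $s$ and hence of $P$.

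The same argument applies verbatim, after a cyclic relabeling of the sectors, to $\mathsf{Y}'_{i,j}$ (vertices in $B,C,E,F$) and to $\mathsf{Z}'_{i,j}$ (vertices in $C,D,F,A$): in both cases the four vertex-sectors again form two pairs of opposite sectors, so there is one line among $\ell_4,\ell_5,\ell_6$ separating the quadrilateral into two edges and another line crossing both of those edges, from which the same sandwich argument places $p$ inside.

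The main obstacle, if any, is justifying the step that $s$ is actually contained in the interior of $P$; this is the only place one uses planarity of the quadrilateral beyond the incidence information given by the sectors. It is handled by noting that $\ell_5$ cannot cross a simple quadrilateral more than twice, so once we know $\ell_5$ enters through $a_ib_j$ and exits through $d_ie_j$, the portion in between must be interior. The rest is purely combinatorial bookkeeping on which line separates which pair of sectors, and follows directly from the clockwise labeling of $A,B,C,D,E,F$ around $p$.
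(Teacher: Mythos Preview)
Your argument is essentially identical to the paper's: you use the same two lines $\ell_4,\ell_5$, the same separation of $a_ib_j$ from $d_ie_j$ by $\ell_4$, the same crossing of both by $\ell_5$, the same segment $s$, and the same conclusion that $p\in s\subset\operatorname{int}(P)$, then invoke symmetry for $\mathsf{Y}'_{i,j}$ and $\mathsf{Z}'_{i,j}$. One small caution: your closing remark that ``$\ell_5$ cannot cross a simple quadrilateral more than twice'' is false in general for non-convex quadrilaterals; the correct reason (which you in fact already gave) is that $\ell_5$ misses the other two sides $b_jd_i$ and $e_ja_i$ because each has both endpoints on the same side of $\ell_5$.
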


\begin{lemma}\label{lem:5-partite-intersect}
  For each pair of graphs from the set $\{\mathsf{X}_{i,j}, \mathsf{Y}_{i,j},
  \mathsf{Z}_{i,j}\}$ there exists a pair of edges, one of each graph, which intersect.
\begin{proof}

We prove by contradiction. Assume that $\mathsf{Q}$ and $\mathsf{R}$ are two different graphs in $\{\mathsf{X}_{i,j},\mathsf{Y}_{i,j},\mathsf{Z}_{i,j}\}$ that do not intersect. By Lemma~\ref{lem:p_inside}, $p$ is inside both polygons, $P_\mathsf{Q}$ and $P_\mathsf{R}$, induced by $\mathsf{Q}$ and $\mathsf{R}$, respectively. Since, by assumption, $\mathsf{Q}$ and $\mathsf{R}$ do not intersect, $P_\mathsf{Q}$ and $P_\mathsf{R}$ do not intersect either, and one has to be contained inside the other (as both contain $p$). Without loss of generality let $P_\mathsf{Q}$ be inside $P_\mathsf{R}$. One edge of $\mathsf{Q}$ is connecting $P_\mathsf{Q}$ (in the interior of $P_\mathsf{R}$) with a vertex in $G$ (in the exterior of $P_\mathsf{R}$) and therefore intersecting an edge of $\mathsf{R}$. This is a contradiction to the assumption, and the theorem follows.

\end{proof}
\end{lemma}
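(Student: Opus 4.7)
The plan is to prove this by contradiction, leveraging Lemma~\ref{lem:p_inside} which places the intersection point $p$ inside each of the quadrilateral subgraphs $\mathsf{X}'_{i,j}, \mathsf{Y}'_{i,j}, \mathsf{Z}'_{i,j}$. Suppose we have two graphs $\mathsf{Q}, \mathsf{R}$ from the set $\{\mathsf{X}_{i,j}, \mathsf{Y}_{i,j}, \mathsf{Z}_{i,j}\}$ (with any admissible indices) such that no edge of $\mathsf{Q}$ intersects any edge of $\mathsf{R}$. Write $P_\mathsf{Q}$ and $P_\mathsf{R}$ for the simple (possibly non-convex) quadrilateral polygons bounded by the quadrilateral subgraphs $\mathsf{Q}', \mathsf{R}'$ contained in $\mathsf{Q}, \mathsf{R}$. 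The goal is to derive a contradiction from the assumption by exploiting the fifth, ``outlying'' edge of each subgraph that connects a quadrilateral vertex to a point in the set $G$.

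Step one is a topological observation about $P_\mathsf{Q}$ and $P_\mathsf{R}$. By Lemma~\ref{lem:p_inside}, both polygons contain $p$ in their interior. Since their boundary edges belong to $\mathsf{Q}$ and $\mathsf{R}$ respectively, the no-intersection hypothesis forces the two boundaries to be disjoint as point sets in the plane. Two simple polygons with disjoint boundaries and a common interior point must be nested, so without loss of generality we may assume that $P_\mathsf{Q}$ lies strictly inside $P_\mathsf{R}$.

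Step two uses the geometric placement of $G$ relative to $P_\mathsf{R}$. All vertices of the quadrilateral $\mathsf{R}'$ lie in $A \cup B \cup \cdots \cup F \subset A'$, which sits above the line $\ell_3$, whereas every point of $G$ lies in $B'$, strictly below $\ell_3$. Consequently the entire polygon $P_\mathsf{R}$ is contained in the closed half-plane above $\ell_3$, and every vertex of $G$ lies in the opposite open half-plane; in particular the specific vertex $g \in G$ incident to the fifth edge of $\mathsf{Q}$ is in the exterior of $P_\mathsf{R}$. Now consider the fifth edge $e$ of $\mathsf{Q}$: one endpoint of $e$ is a vertex of the quadrilateral $\mathsf{Q}'$, which lies inside $P_\mathsf{R}$, while the other endpoint is $g$, lying outside $P_\mathsf{R}$. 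Therefore $e$ must cross the boundary of $P_\mathsf{R}$, i.e.\ it crosses some edge of $\mathsf{R}$, contradicting the no-intersection hypothesis.

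The genuinely delicate step is step one: justifying that two simple polygons whose interiors share a point but whose boundaries are disjoint must be nested. This is intuitive from planarity but should be stated carefully, perhaps invoking the Jordan curve theorem applied to the outer polygon. The remaining ingredients, namely that $G$ is separated from the quadrilateral region by $\ell_3$ and that the fifth edge must exit $P_\mathsf{R}$, are straightforward from the construction of the configuration $\mathcal{L}$.
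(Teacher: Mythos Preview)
Your argument is essentially identical to the paper's: contradiction, invoke Lemma~\ref{lem:p_inside} to place $p$ in both quadrilaterals, deduce nesting from disjoint boundaries, and use the fifth edge to $G$ to force a crossing with the outer boundary. The only slip is in Step two: $B'$ lies \emph{between} $\ell_2$ and $\ell_3$, not below $\ell_3$, so the separating line you want is $\ell_2$, not $\ell_3$ (the vertices of $P_{\mathsf{R}}$ lie in $A'$, above $\ell_2$, while $G\subset B'$ lies below $\ell_2$). With that correction the proof matches the paper's, and in fact supplies the justification for ``$g$ is in the exterior of $P_{\mathsf{R}}$'' that the paper leaves implicit.
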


We can now end the proof of our main theorem.

\begin{proof}[\textbf{Proof of Theorem~\ref{thm:big} ii)}]

  For every geometric graph $\mathsf{K}_n$ of $K_n$ with $n > 18$, one can construct the
  configuration $\mathcal{L}$. Using $\mathcal{L}$ we choose the edge disjoint graphs  $\mathsf{G}_{i,j}$
  ($\mathsf{G}_{i,j}\in\{\mathsf{X}_{i,j},\mathsf{Y}_{i,j},\mathsf{Z}_{i,j}\}$). By construction $\mathsf{K}_n$
  contains $\frac{12}{169}n^2-\Theta(n)$ of these graphs, and we assign a different color to each of them. By
  Lemma~\ref{lem:5-partite-intersect} each two of these subgraphs
  intersect, therefore $0.0710n^{2}-\Theta(n)\leq\psi_{g}(K_{n})$.
\end{proof}

\section*{Acknowledgments}

Part of the work was done during the $4^{th}$ Workshop on Discrete Geometry and its Applications, held at Centro de Innovaci\'on Matem\'atica, Juriquilla, Mexico, February 2012. We thank Marcelino Ram\'irez-Iba\~nez and all other participants for useful discussions.

O.A. partially supported by the ESF EUROCORES programme EuroGIGA - ComPoSe, Austrian Science Fund (FWF): I~648-N18. G.A. partially supported by CONACyT of Mexico, grant 166306; and PAPIIT of Mexico, grant IN101912. T.H. supported by the Austrian Science Fund (FWF): P23629-N18 `Combinatorial Problems on Geometric Graphs'. D.L. partially supported by CONACYT of Mexico, grant 153984.

\bibliographystyle{amsplain}
\bibliography{biblio}
\end{document}